\documentclass[12pt]{article}
\usepackage[margin=1in]{geometry}
\usepackage{graphicx}
\usepackage{multirow}%
\usepackage{amsmath,amssymb,amsfonts,amsthm}
\usepackage{hyperref}
\usepackage{cleveref}
\usepackage{mathrsfs}%
\usepackage[title]{appendix}%
\usepackage{xcolor}%
\usepackage{textcomp}%
\usepackage{manyfoot}%
\usepackage{booktabs}%
\usepackage{algorithm}%
\usepackage{algorithmicx}%
\usepackage{algpseudocode}%
\usepackage{listings}%
\usepackage{fixdif}

\usepackage{longtable}
\usepackage{multirow}
\usepackage{lscape}
\usepackage{subcaption}
\usepackage[width=.75\textwidth]{caption}
\usepackage{bm}
\usepackage{bbold}
\usepackage{cases}
\usepackage{authblk}
\usepackage{tikz}
\allowdisplaybreaks

\newtheorem{theorem}{Theorem}
\newtheorem{proposition}[theorem]{Proposition}%
\newtheorem{lemma}[theorem]{Lemma}%
\newtheorem{corollary}[theorem]{Corollary}%

\theoremstyle{thmstylethree}%

\newcommand{\mtc}[1] {\mathcal{#1}}

\raggedbottom

\begin{document}

\title{Production planning under demand and endogenous supply uncertainty}

\author[1]{Mike Hewitt}
\author[2]{Giovanni Pantuso}

\affil[1]{Department of Information Systems and Supply Chain Management, Loyola University Chicago, USA \\
  \href{mailto:mhewitt3@luc.edu}{mhewitt3@luc.edu}  
}
\affil[2]{Department of Mathematical Sciences, University of Copenhagen, Denmark \\
  \href{mailto:gp@math.ku.dk}{gp@math.ku.dk}}

\date{}

\maketitle

\begin{abstract}
We study the problem of determining how much finished goods inventory to source from different capacitated facilities in order to maximize profits resulting from sales of such inventory. We consider a problem wherein there is uncertainty in demand for finished goods inventory and production yields at facilities. Further, we consider that uncertainty in production yields is endogenous, as it depends on both the facilities where a product is produced and the volumes produced at those facilities. We model the problem as a two stage stochastic program and propose an exact, Benders-based algorithm for solving instances of the problem. We prove the correctness of the algorithm and with an extensive computational study demonstrate that it outperforms known benchmarks. Finally, we establish the value in modeling uncertainty in both demands and production yields.
\end{abstract}

\section{Introduction}\label{sec1}
In many industries, organizations sell finished goods inventories of multiple products to customers to make a profit. To support such sales, these organizations face the production planning problem of determining how much finished goods inventory of these products to source from multiple facilities. Such a planning problem is often complicated by uncertainty in supply, demand, or both. On the supply side, the organization may place a production order with a facility but only a percentage of items produced are of sufficient quality to be sold. In addition, that percentage, often referred to as a \textit{yield}, is not known with certainty. On the demand side, the organization may not know with certainty how much demand there will be for finished goods of each product. 

When statistical distributions for uncertain parameter values (demands, yields) are available, and production decisions do not impact those distributions (i.e. they are exogenous to planning decisions), such a planning problem is a natural candidate for a two-stage stochastic programming model. In such a model, production decisions at facilities are made in the first stage before yield and demand values are revealed. After they are revealed, the second stage models meet known demands for products with known amounts of finished goods inventory. The goal of the stochastic program is to maximize expected profits. While easy to state, algorithmic developments may be needed for solving large-scale instances of such a model. 

However, there are many industrial situations in which demand uncertainty is exogenous but production planning decisions do impact supply uncertainty. For example, consider a company that relies on third party contract manufacturers for some or all of its finished goods inventories. Some contract manufacturers may have better production facilities and processes than others. In such a case, while the type of distribution (e.g. Normal) that best describes production yields may be the same for all manufacturers, the distribution means for some manufacturers may be higher. Or, suppose the company has a longer history of ordering finished goods from some manufacturers than others. In such a case, the type of distribution and its mean that best describes production yields  may be the same across all manufacturers. However, the variances are smaller for the ones with whom the company has more experience. In both these examples, the choice of contract manufacturer(s) that produce a product impacts the supply uncertainty faced.  

Alternately, suppose a manufacturer gets better at producing products of sufficient quality the more it produces. In such a case the mean of a yield distribution at a facility may increase the larger the finished goods order placed with that facility.  Or, suppose orders have deadlines and the yield represents the percentage of the order that is completed by the deadline. The literature on learning curves \cite{ANZANELLO2011573} has shown that the more often an individual performs a task the less time they require to complete it. Thus, the mean of this yield distribution will depend on the size of the production order and in multiple ways. Conversely, the literature \cite{Moutaz1994,SANA2010158} has also identified cases in which yields go down as production volumes (or rates) increase. This could be due to factors such as machine wear or worker fatigue. In all these cases, the yield distribution at a facility may depend on the finished goods order placed with that facility.

A practical example of the problem we consider occurs in agricultural planning \cite{jones2001matching,jones2003managing,ahumada2009application,syngenta_cover}. Namely, the determination of how much to plant of different crops in different fields. Due to the long time that elapses between when a field is planted and the crop harvested, demand is typically only known in distribution. Similarly, due to multiple effects, including weather, the yield of a crop in a field is often uncertain as well. There has been increased adoption \cite{9167626,8643194,9718402} of technology to measure factors that are believed to impact crop yields, and in real time. This data can then be used to more accurately forecast crop yield and its dependence on those factors \cite{JIN2018141}.  Further, the production costs that reflect acquiring seeds to plant and the planting process itself cannot be recovered. Because of different climates, the distributions of harvest yields may vary by field. Thus, the fields in which a crop is planted impact the uncertainty in how much is harvested.  As a grower's ability to effectively harvest a crop tends to improve the more of it they plant, the distributions of the yield in a given field may depend on the amount planted.  Thus, how much crop is planted in a chosen field impacts the uncertainty in how much is harvested.

Motivated by these examples (and others) we consider a production planning problem in which demand uncertainty is exogenous but supply uncertainty is endgenous.
We  assume that both demand distributions and yield distributions can be approximated to a sufficient level of accuracy by a finite set of scenarios.
We model the problem as a two-stage stochastic program in which the first stage of the model contains the same decisions as a stochastic program that models exogenous uncertainty in demand and supply.
However, we explicitly model that the resulting yield distributions and set of appropriate scenarios to consider in the second stage depends on those first stage decisions.
As doing so leads to a very large number of scenarios we propose an exact Benders-based solution method for solving instances of the problem.
With an extensive computational study we demonstrate the value in modeling supply and demand uncertainty separately as well as together.
We also demonstrate computationally that the proposed Benders-based scheme outperforms two known benchmarks. 

We believe this paper makes the following contributions.
\begin{itemize}
    \item We introduce a mathematical model for a production planning problem which recognizes exogenous uncertainty in demand and endogenous uncertainty in yield. The model we propose is general and can be adapted to different manufacturing settings and practical considerations in those settings. 
    \item We propose an exact Benders-based decomposition method and prove its correctness. With an extensive computational study we demonstrate that it significantly outperforms known benchmarks. 
    \item We substantiate the need for modeling both yield and demand uncertainty, offering estimates of the increase in expected profits when doing so. Further, we show that modeling both sources of uncertainty leads to a much greater increase in expected profits than modeling only one.
\end{itemize}
The rest of this paper is organized as follows. Section \ref{sec:lit} reviews relevant literature while Section \ref{sec:problem_model} presents a detailed description of the problem we consider and a model of that problem. Section \ref{sec:benders} presents the Benders-based solution method we propose along with additional speed-up techniques. Section \ref{sec:comp_analysis} describes how our computational study was performed and discusses results regarding the value of modeling uncertainty and the performance of the proposed Benders-based method as compared to two benchmarks. Finally, Section \ref{sec:conclusion_future} concludes the paper and presents avenues for future research. 

\section{Literature Review}\label{sec:lit}

In this section, we first discuss relevant literature on optimization-based methods for production planning. We then discuss the literature on stochastic programming.

Starting with the earliest contribution by \cite{ModH55} the literature on mathematical programming models for production planning is extensive \cite{KemKU11}. Given that, we focus our discussion on papers that are particularly relevant to the problem considered in this paper. The interested reader can refer to available textbooks, e.g.,  \cite{JohM74,HaxC84,VosW06,KemKU11}, and surveys, e.g., \cite{GelV81,YanL95,GroG04,MulPGL06,DiaMP14,JamYFXJ19}.

Despite its ability to provide mathematically richer formulations \cite{MisU11} stochastic programming approaches have not reached similar extensive application as in areas such as energy planning and financial services \cite{KemKU11}. As a result, a large portion of the existing research studies deterministic problems \cite{JamYFXJ19}. The majority of the articles dealing with uncertainty consider uncertainty in demand \cite{DiaMP14}. Other sources of uncertainty include, e.g., costs \cite{LeuN07}, input quality \cite{DenFS10,GuaP11}, and yield \cite{YanL95}.

Among these, we find a number of germane studies which address yield and demand uncertainty, possibly simultaneously. \cite{Kaz04} studies production planning with random yield and demand. Particularly, sale price and purchasing cost are increasing with decreasing random yield. The paper focuses on the olive oil industry. 
Similarly, \cite{KazW11} study the role of the yield-dependent trading cost structure influencing the optimal choice of the selling price and production quantity in the agricultural industry. 
\cite{XiaYX15} considers remanufacturing and pricing decisions when both the remanufacturing yield and the demand for remanufactured products are random.
\cite{KanZ18} study a single-item multi-period disassembly scheduling problem with random yields and demands in which procured, returned items are disassembled into components to satisfy their demands. 
\cite{XieMG21} consider demand and yield uncertainty in a study of buyback contracts for a two-echelon supply chain consisting of a buyer and a seller. 
\cite{XiaGYL21} consider a supply chain consisting of a manufacturer, a reliable supplier, and an unreliable supplier with random yield. The authors model the interaction between the actors and derive optimal production quantity of the suppliers and ordering quantity of the manufacturer. 
\cite{KouXY21} study production and pricing decisions of a risk-averse monopoly firm  under supply random yield. They investigate the impact of the firm’s risk-aversion level on its optimal decisions and the corresponding profit. 
\cite{DonGXY22} study the sourcing of a monopoly firm that procures from multiple unreliable suppliers to meet deterministic and price-dependent demand. The production processes of the suppliers are however unreliable and modeled by correlated proportional random yields. 

Our contribution distinctly departs from available methods as it explicitly incorporates both demand and  yield uncertainty, where the specification of the yield uncertainty is endogenously determined by production decisions. This new trait complicates the problem substantially as it prevents the use of classical stochastic programming techniques. Particularly, the resulting problem is adequately rendered in mathematical terms as a stochastic program with endogenous uncertainty, a type of problem where, as we explain next, methodological advancements are still sparse. Some of these methodological advancements are part of our contribution.

From a methodological point of view, following \cite{GoeG06}, there exist at least two ways in which decisions can influence the the nature of underlying stochastic process in stochastic programs.
The first possibility is that decisions alter the probability space underlying the stochastic process, 
thus changing the likelihood of the possible events or the nature of the event space. 
The second possibility is that decisions determine the time when the uncertainty is resolved. 
The problem dealt with in this article is concerned with the first type of endogenous uncertainty.

The research dealing with decisions influencing probability distributions is rather sparse. 
One of the first approaches dates back to \cite{Pfl90} who consider a Markovian random process whose transition probabilities depend on the decision variables of the optimization problem. The author provides an algorithm that converges with probability one. 
Later, \cite{JonWW98} consider the case where decisions influence both the probability measure and the timing of the observation. 
Their framework includes both two- and multi-stage problems. Nevertheless, the decisions that have an impact on the uncertainty are entirely made at the first decision stage. As in the case described in this article, the authors assume that the set of probability measures which can be enforced by decisions is finite and countable. The authors show that the problem can be recast as choosing the best among the stochastic programs determined by a choice of a distribution and 
propose an implicit enumeration algorithm.
In a similar framework, \cite{Pan21} consider multi-stage problems and extends the model of \cite{JonWW98} by allowing decisions at all stages to determine the probability measure for the later stages. The author offers a solution method for a special case and a general purpose multi-distribution scenario tree structure and a mathematical formulation that avoids explicit statement of non-anticipativity constraints which are typically linked to model size growth, see e.g., \cite{ApaG16,HooM16,MooM18}. 
\cite{Hel16} and \cite{HelBT18} discuss several ways of modeling the interplay between the decision variables and the parameters of the underlying probability distributions in two-stage stochastic programs. Particularly, the authors formulate two-stage models where prior probabilities are distorted
through affine transformations, or combined using convex combinations of several probability distributions.
Furthermore, the authors present models which incorporate the parameters of the probability distribution as
first-stage decision variables.

Beyond this, the literature presents a number of practical applications.
\cite{Ahm00} illustrates examples of problems with endogenous uncertainty, such as facility location, network design and
server selection. The author also presents an exact solution method for the resulting one-stage integer problems.
\cite{VisSF04} consider the problem of investing in strengthening actions for the links of a network subject to
disruptive events. The problem is modeled as a two-stage stochastic program where first-stage investment decisions
influence the likelihood of disruptive events happening at upgraded links. The problem is solved using an approximate solution procedure. 
The same problem is studied also by
\cite{Fla10}, \cite{PeeSGV10} and \cite{LauPK14}. 
\cite{HelW05} consider the problem of interdicting a stochastic network, that is a network whose structure is unknown to the interdictor. 
In this problem, the probabilities of different future network configurations depend on previous interdiction actions.
\cite{TonFR12} present an oil refinery planning problem considering that the uncertainty in product
yield is influenced by operation mode changeovers.  Finally, \cite{EscGMU18} study the problem of mitigating the effects of
natural disasters through preventive actions. The problem is formulated as a three-stage stochastic
bilinear integer program with both exogenous and endogenous uncertainty. Particularly, decisions can influence
both the probabilities and the intensity of future uncertain events. 

When the second type of uncertainty is considered, we are in the presence of random processes whose resolution time depends on the decisions made. 
A typical example is provided by \cite{GoeG04}: The decision maker has to decide which gas reservoirs to explore, and when, by installing exploration facilities. The size and quality of the reservoirs is uncertain and the uncertainty is resolved only after facilities have been installed. The literature dealing with this type of stochastic programs includes \cite{ColM08}, \cite{TarG08}, \cite{TarGG09}, \cite{ColM10}, \cite{GupG11}, \cite{MerV11}, \cite{TarGG13}, \cite{ApaG16}.

The problem studied in this article is in line with the work of \cite{JonWW98} and \cite{Pan21} in that the set of potential probability distributions enforced by first-stage decisions is finite and countable.
Particularly, \cite{JonWW98} propose an implicit enumeration algorithm which relies on computing and storing bounds for each choice of a probability distribution. As the authors acknowledge, this solution strategy is viable only when the set of probability measures has a small cardinality.
As it will be more evident in \Cref{sec:problem_model}, this is not the case in the problem at hand. In our case, the number of probability measures grows exponentially with the size of the problem, particularly with the number of production levels. In addition, capacity constraints on the facilities impose a mutual dependence between the probability distributions enforced on different products so that enumaration becomes impractical. To overcome these methodological limitations, we propose an exact Benders-based decomposition strategy based on an ad-hoc optimality cuts and strong valid inequalities. \Cref{sec:comp_analysis} proves the effectivenes of the proposed algorithm.

\section{Problem description and mathematical model}
\label{sec:problem_model}
In this section, we present a formal description of the problem we consider. We then present a mathematical model of that problem. 

\subsection{Problem description}
\label{subsec:problem}
\noindent
We consider a company that sources and sells multiple products. These products can be sourced from one or more capacitated production facilities. These facilities may be manufacturing plants owned and operated by the company or  external suppliers. Each facility has a capacity with respect to total volume it can produce across all products.  It may be that a product cannot be sourced from every facility. When a facility can supply a product, there is a per-unit cost associated with doing so. This cost can vary by product and by facility. 

Finished units of a product for which there is demand can be sold at a pre-determined price that varies by product. Finished units of a product for which there is not demand can be sold at a discounted price that also varies by product. We note that this discounted price is less than the cost of manufacture at any facility. The company makes decisions regarding how many units to source of each product and from which facilities to source those units with uncertainty regarding product demand and supply. It makes these decisions with the goal of maximizing expected profit.

Specifically, the manufacturer faces two sources of uncertainty. The first is the demand for each product. Product demands can be assumed to follow known statistical distributions with known parameter values for those distributions. In addition, demands for different products may be correlated (either positive or negative). However, demand for a product does not depend on the number of units available for sale of that product. In other words, uncertainty in product demands is exogenous. 

The second source of uncertainty relates to the yield of each product at each facility. Namely, while the manufacturer may supply a facility with the raw materials to make a given number of units of a product, the manufacturing process may in fact yield a smaller number of finished goods. These yields are not known with certainty. Instead, probability distributions, and their parameter values, are known. In addition, there may be correlation between the yields of a given product at different facilities.  Finally, yield distributions depend in part on which of the finite levels of production is chosen for a product at each facility, wherein associated with a production level are lower and upper limits on production volume. Note that the yield distributions for a product may be different for different facilities. 

\subsection{Mathematical model}
\label{subsec:model}
We next present a mathematical model of the problem we consider.
Consider a manufacturer that produces a set of products $\mathcal{P}$ at a set of facilities $\mathcal{F}$.
The manufacturer seeks to determine how much of each product $p \in \mathcal{P}$ to make at each facility $f \in \mathcal{F}$.
For each facility $f$ and product $p$ the manufacturer chooses a production level $l$ that defines a lower limit,
$L_{pfl},$ and upper limit, $U_{pfl},$ on the production volume for that product at that facility.
We let $\mathcal{L}_{pf}$ denote the set of potential production levels for product $p$ at facility $f$.
We assume that, for each product $p$ and facility $f$, the production levels determine disjoint production intervals.
That is $[L_{pfl},U_{pfl}]\cap [L_{pfl'},U_{pfl'}]=\emptyset$ for all $l,'l\in\mathcal{L}_{pf}$. 
We presume that the sets $\mathcal{L}_{pf}$ include a level that models  the company not producing product $p \in \mathcal{P}$ at facility $f \in \mathcal{F}$. Namely, a level $l$ such that $L_{pfl}=U_{pfl}=0$. For each facility $f \in \mathcal{F}$ there is a maximum, $B_{f}$, on the total volume it can be allocated. 

We recall that when planning production, the yield of each product at each facility, and the cumulative demand of each product are uncertain to the decision maker. 
As such, we let $\bm{\xi}_p=(\mathbf{Y}_{p,f_1},\ldots,\mathbf{Y}_{p,f_{|\mathcal{F}|}},\mathbf{D}_p)$ denote a random variable representing the uncertain yield of product $p$ at the different facilities, $\mathbf{Y}_{pf}$, and its cumulative demand, $\mathbf{D}_p$. Observe that we use bold fonts for random variables (e.g., $\bm{\xi}_p$) and plain fonts for their realizations (e.g., $\xi_p$). We assume that, for each product $p$, only finitely many probability distributions can be determined and let $\mathcal{D}_p$ denote the set of  joint probability distributions of the yield and demand of product $p$. 
We also presume that there is a map $l:\mathcal{P}\times\mathcal{F}\times \cup_{p\in\mathcal{P}}\mathcal{D}_p\to \cup_{f\in\mathcal{F}}\mathcal{L}_f$ that, for each product $p$ and facility $f$, indicates the production level of product $p$ at facility $f$ that leads to distribution $d \in \mathcal{D}_p$. 
We note that we do not presume this mapping is one-to-one. Namely, for a given product $p$ and facility $f$ there may be multiple distributions that map to the same production level. Rather, it is the specific combination of production levels at the different facilities that determines a probability distribution.  However, the mapping does preclude  multiple production levels $l,l'$ of product $p$ at facility $f$ requiring the same distribution $d.$ If required, this case can be accommodated by creating a second distribution $d'$ which is identical to $d$ and having $l(p,f,d)$ imply production level $l$ and $l(p,f,d')$ imply production level $l'.$
Recall that we presume correlations between yields for the same product at different facilities may exist.
As the yield distribution at a specific facility also depends on the production level chosen at that facility,
for product $p$ it is necessary that the production level indicated by the map $l$ is chosen for each facility $f$ for probability distribution $d\in\mathcal{D}_p$ to accurately represent uncertainty in yields.
We assume that for each pair of products $(p_1,p_2)$ the respective yields $\mathbf{Y}_{p_1,f}$ and $\mathbf{Y}_{p_2,f}$, at the same or at different facilities $f$, are mutually independent.
Finally, we also note that in our mathematical formulation the distribution of demand is endogenously determined by the choice of production level. This entails that the model is also able to handle endogeneity of demand, should that present itself. In the cases we address we simply assume that each distribution has the same marginal distribution of the demand.

We further assume that each probability distribution is either discrete or can be adequately represented by a finite number of scenarios. We let $\mathcal{S}_{pd}$ represent the set of scenarios in distribution $d \in \mathcal{D}_p$ for product $p \in \mathcal{P}$. Associated with scenario $s \in \mathcal{S}_{pd}$ for distribution $d \in \mathcal{D}_{p}$ we let $\pi_{sd}$ denote the probability that it occurs. Regarding a specific scenario $s \in \mathcal{S}_{pd}$, we let $Y_{pfds}$ be the realization of the yield of product $p$ at facility $f$ under scenario $s\in\mathcal{S}_{pd}$ of distribution $d\in\mathcal{D}_{p}$. Similarly, we let $D_{pds}$ represent the realization of the demand for product $p$ under scenario $s\in\mathcal{S}_{pd}$ of distribution $d\in\mathcal{D}_p$.

\begin{figure}
    \centering

\tikzset{every picture/.style={line width=0.75pt}} 

\begin{tikzpicture}[x=0.75pt,y=0.75pt,yscale=-1,xscale=1]

\draw   (50,40) -- (310,40) -- (310,96) -- (50,96) -- cycle ;
\draw    (69.41,96) -- (51.47,138) ;
\draw    (69.41,96) -- (87.94,138) ;
\draw   (51.47,138) -- (62.94,160) -- (40,160) -- cycle ;
\draw   (87.94,138) -- (100,160) -- (75.88,160) -- cycle ;
\draw    (138.53,96) -- (120.59,138) ;
\draw    (138.53,96) -- (157.06,138) ;
\draw   (109.59,149) .. controls (109.59,142.92) and (114.51,138) .. (120.59,138) .. controls (126.66,138) and (131.59,142.92) .. (131.59,149) .. controls (131.59,155.08) and (126.66,160) .. (120.59,160) .. controls (114.51,160) and (109.59,155.08) .. (109.59,149) -- cycle ;
\draw   (147.06,149.47) .. controls (147.06,143.14) and (152.19,138) .. (158.53,138) .. controls (164.86,138) and (170,143.14) .. (170,149.47) .. controls (170,155.81) and (164.86,160.94) .. (158.53,160.94) .. controls (152.19,160.94) and (147.06,155.81) .. (147.06,149.47) -- cycle ;
\draw    (208.53,96) -- (190,140) ;
\draw    (208.53,96) -- (230,140) ;
\draw   (180,140) -- (200,140) -- (200,160) -- (180,160) -- cycle ;
\draw   (220,140) -- (240,140) -- (240,160) -- (220,160) -- cycle ;
\draw    (278.53,96) -- (260,140) ;
\draw    (278.53,96) -- (300,140) ;
\draw   (260,140) -- (270,150) -- (260,160) -- (250,150) -- cycle ;
\draw   (300,140) -- (310,150) -- (300,160) -- (290,150) -- cycle ;
\draw   (370,40) -- (630,40) -- (630,96) -- (370,96) -- cycle ;
\draw    (389.41,96) -- (371.47,138) ;
\draw    (389.41,96) -- (407.94,138) ;
\draw   (371.47,138) -- (382.94,160) -- (360,160) -- cycle ;
\draw   (407.94,138) -- (420,160) -- (395.88,160) -- cycle ;
\draw    (458.53,96) -- (440.59,138) ;
\draw    (458.53,96) -- (477.06,138) ;
\draw   (429.59,149) .. controls (429.59,142.92) and (434.51,138) .. (440.59,138) .. controls (446.66,138) and (451.59,142.92) .. (451.59,149) .. controls (451.59,155.08) and (446.66,160) .. (440.59,160) .. controls (434.51,160) and (429.59,155.08) .. (429.59,149) -- cycle ;
\draw   (467.06,149.47) .. controls (467.06,143.14) and (472.19,138) .. (478.53,138) .. controls (484.86,138) and (490,143.14) .. (490,149.47) .. controls (490,155.81) and (484.86,160.94) .. (478.53,160.94) .. controls (472.19,160.94) and (467.06,155.81) .. (467.06,149.47) -- cycle ;
\draw    (528.53,96) -- (510,140) ;
\draw    (528.53,96) -- (550,140) ;
\draw   (500,140) -- (520,140) -- (520,160) -- (500,160) -- cycle ;
\draw   (540,140) -- (560,140) -- (560,160) -- (540,160) -- cycle ;
\draw    (598.53,96) -- (580,140) ;
\draw    (598.53,96) -- (620,140) ;
\draw   (580,140) -- (590,150) -- (580,160) -- (570,150) -- cycle ;
\draw   (620,140) -- (630,150) -- (620,160) -- (610,150) -- cycle ;

\draw (62,80) node [anchor=north west][inner sep=0.75pt]  [xscale=0.8,yscale=0.8]  {$d_{1}$};
\draw (132,80) node [anchor=north west][inner sep=0.75pt]  [xscale=0.8,yscale=0.8]  {$d_{2}$};
\draw (202,80) node [anchor=north west][inner sep=0.75pt]  [xscale=0.8,yscale=0.8]  {$d_{3}$};
\draw (272,80) node [anchor=north west][inner sep=0.75pt]  [xscale=0.8,yscale=0.8]  {$d_{4}$};

\draw (382,80) node [anchor=north west][inner sep=0.75pt]  [xscale=0.8,yscale=0.8]  {$d_{1}$};
\draw (452,80) node [anchor=north west][inner sep=0.75pt]  [xscale=0.8,yscale=0.8]  {$d_{2}$};
\draw (522,80) node [anchor=north west][inner sep=0.75pt]  [xscale=0.8,yscale=0.8]  {$d_{3}$};
\draw (592,80) node [anchor=north west][inner sep=0.75pt]  [xscale=0.8,yscale=0.8]  {$d_{4}$};

\draw (42,163.4) node [anchor=north west][inner sep=0.75pt]  [xscale=0.8,yscale=0.8]  {$s^{d_{1}}_{1}$};
\draw (77.88,163.4) node [anchor=north west][inner sep=0.75pt]  [xscale=0.8,yscale=0.8]  {$s^{d_{1}}_{2}$};
\draw (111.12,163.4) node [anchor=north west][inner sep=0.75pt]  [xscale=0.8,yscale=0.8]  {$s^{d_{2}}_{1}$};
\draw (147,163.4) node [anchor=north west][inner sep=0.75pt]  [xscale=0.8,yscale=0.8]  {$s^{d_{2}}_{2}$};
\draw (181.12,163.4) node [anchor=north west][inner sep=0.75pt]  [xscale=0.8,yscale=0.8]  {$s^{d_{3}}_{1}$};
\draw (217,163.4) node [anchor=north west][inner sep=0.75pt]  [xscale=0.8,yscale=0.8]  {$s^{d_{3}}_{2}$};
\draw (251.12,163.4) node [anchor=north west][inner sep=0.75pt]  [xscale=0.8,yscale=0.8]  {$s^{d_{4}}_{1}$};
\draw (291,163.4) node [anchor=north west][inner sep=0.75pt]  [xscale=0.8,yscale=0.8]  {$s^{d_{4}}_{2}$};
\draw (170,62.4) node [anchor=north west][inner sep=0.75pt]  [xscale=0.8,yscale=0.8]  {$F_{1}$};
\draw (21,42.4) node [anchor=north west][inner sep=0.75pt]  [xscale=0.8,yscale=0.8]  {$L^p_{A}$};
\draw (21,70.4) node [anchor=north west][inner sep=0.75pt]  [xscale=0.8,yscale=0.8]  {$L^p_{B}$};
\draw (362,163.4) node [anchor=north west][inner sep=0.75pt]  [xscale=0.8,yscale=0.8]  {$s^{d_{1}}_{1}$};
\draw (397.88,163.4) node [anchor=north west][inner sep=0.75pt]  [xscale=0.8,yscale=0.8]  {$s^{d_{1}}_{2}$};
\draw (431.12,163.4) node [anchor=north west][inner sep=0.75pt]  [xscale=0.8,yscale=0.8]  {$s^{d_{2}}_{1}$};
\draw (467,163.4) node [anchor=north west][inner sep=0.75pt]  [xscale=0.8,yscale=0.8]  {$s^{d_{2}}_{2}$};
\draw (501.12,163.4) node [anchor=north west][inner sep=0.75pt]  [xscale=0.8,yscale=0.8]  {$s^{d_{3}}_{1}$};
\draw (537,163.4) node [anchor=north west][inner sep=0.75pt]  [xscale=0.8,yscale=0.8]  {$s^{d_{3}}_{2}$};
\draw (571.12,163.4) node [anchor=north west][inner sep=0.75pt]  [xscale=0.8,yscale=0.8]  {$s^{d_{4}}_{1}$};
\draw (611,163.4) node [anchor=north west][inner sep=0.75pt]  [xscale=0.8,yscale=0.8]  {$s^{d_{4}}_{2}$};
\draw (490,62.4) node [anchor=north west][inner sep=0.75pt]  [xscale=0.8,yscale=0.8]  {$F_{2}$};
\draw (341,42.4) node [anchor=north west][inner sep=0.75pt]  [xscale=0.8,yscale=0.8]  {$L^p_{C}$};
\draw (341,70.4) node [anchor=north west][inner sep=0.75pt]  [xscale=0.8,yscale=0.8]  {$L^p_{D}$};

\end{tikzpicture}
\caption{Example with two facilities, $F_1$ and $F_2$, with two production level each, ($L_A$, $L_B$) and ($L_C$, $L_D$), respectively, and four possible distributions $d_1$ through $d_4$, represented by nodes of different shapes. Each distribution $d_i$ has two possible realizations, namely $s_1^{d_i}$ and $s_2^{d_i}$. }\label{fig:f1}
\end{figure}
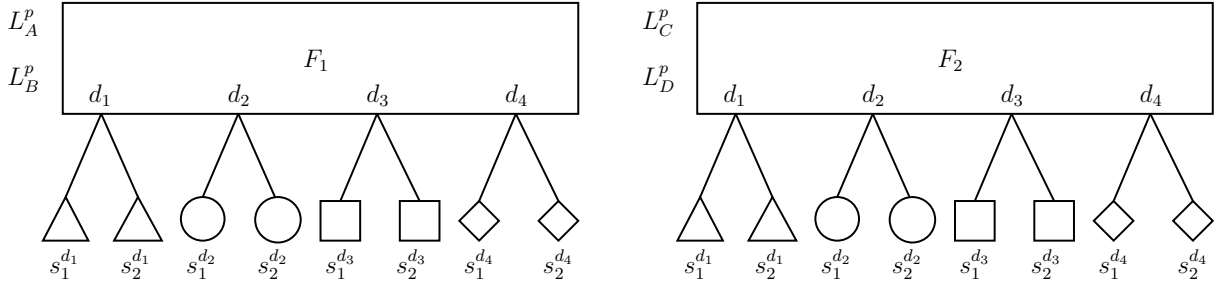

The example illustrated in \Cref{fig:f1} depicts the case of two facilities,  $F_1$ and $F_2,$ that can produce product $p\in\mathcal{P}$. At $F_1$ it is possible to produce product $p$ at levels $L^p_A$ and $L^p_B$. At $F_2$ it is possible to produce at levels $L^p_C$ and $L^p_D$. Thus, in the example, $\mathcal{L}_{F_1,p}=\{L^p_A,L^p_B\}$ and $\mathcal{L}_{F_2,p}=\{L^p_C,L^p_D\}$. 
In addition, there are four alternative probability distributions which may materialize, that is $\mathcal{D}_p=\{d_1,\ldots,d_4\}$.  In the example, the scenarios of different distributions are depicted by different shapes. Note that the shape for a given distribution is the same at different facilities as distribution $d$ describes the yield uncertainty for all facilities. We have $\mathcal{S}_{p,d_i}=\{s_1^{d_i},s_2^{d_i}\}$ for $i=1,\ldots,4$. 
Finally, \Cref{tab:example2} serves to illustrates the map $l(p,f,d)$ for the example in \Cref{fig:f1}.
Considering distribution $d_1$, we have $l(p,F_1,d_1)=L^p_A$ and $l(p,F_2,d_1) = L^p_C$, indicating that for $d_1$ to accurately describe the uncertainty in production yields at the two facilities, production should fall within the bounds associated with level $L_{A}^{p}$ at $F_1$ and the bounds associated with level $L_{C}^{p}$ at $F_2.$
\color{black}

\begin{table}
    \caption{Relationship between production level choices and distribution enforced in the example in \Cref{fig:f1} for each individual product.}
    \label{tab:example2}
    \centering
    \begin{tabular}{cc|c}
    \toprule
    \multicolumn{2}{c}{Level}\\
         $F_1$&$F_2$& Resulting distribution  \\
         \midrule
         $L^p_A$&$L^p_C$&$d_1$  \\
         $L^p_A$&$L^p_D$&$d_2$  \\
         $L^p_B$&$L^p_C$&$d_3$  \\
         $L^p_B$&$L^p_D$&$d_4$  \\
         \bottomrule
    \end{tabular}
\end{table}

Having established how we represent uncertainty, we next introduce a formal mathematical model for the problem described above. Our mathematical model takes the form of a two-stage stochastic program in which the probability distributions describing random elements are determined by the decisions made in the first decision state. 
We let  $x_{pf}\geq 0$ represent the amount of raw materials, quoted in terms of finished goods, of product $p \in \mathcal{P}$ allocated to facility $f \in \mathcal{F}$. The coefficient $C_{pf}$ represents the cost per unit of product $p$ allocated to facility $f$. We let binary variable $y_{pfl}$ equal $1$ if production level $l \in \mathcal{L}_{pf}$ is chosen for product $p \in \mathcal{P}$ at facility $f \in \mathcal{F}$  and $0$ otherwise.  The binary decision variable $\delta_{pd}$ represents whether distribution $d \in \mathcal{D}_{p}$ should be used to describe yield uncertainties for product $p$. Decision variables $x:=(x_{pf})_{p\in\mathcal{P},f\in\mathcal{F}}$, $y:=(y_{pfl})_{p\in\mathcal{P},f\in\mathcal{F},l \in \mathcal{L}_{pf}}$ and $\delta:=(\delta_{pd})_{p\in\mathcal{P},d\in\mathcal{D}_p}$ are first-stage decision variables as the decisions they model are made in advance of yield and demand information being revealed.

Once yield and demand information have materialized through a scenario $s\in\mathcal{S}_{pd}$ for the distribution $d\in\mathcal{D}_p$ for product $p\in\mathcal{P}$, the decision maker makes second-stage decisions concerning sales. We define the continuous variable $z_{pds}$ to represent the amount of available inventory of product $p \in \mathcal{P}$ in scenario $s \in \mathcal{S}_{pd}$ of distribution $d \in \mathcal{D}_{p}.$ The continuous variable $w_{pds}$ defines the amount of product $p \in \mathcal{P}$ that is sold at ``full'' price $P_{p}$ in scenario $s \in \mathcal{S}_{pd}$ of distribution $d \in \mathcal{D}_{p}$ because there is sufficient demand for the product in that scenario at that price. Finally, the continuous variable $o_{pds}$ represents the amount of inventory of product $p \in \mathcal{P}$ that is not sold at price $P_{p}$ in scenario $s \in \mathcal{S}_{pd}$ of distribution $d \in \mathcal{D}_{p}$ due to insufficient demand. Each unit of this inventory is instead sold at a discounted price, $O_p,$ wherein $O_p < C_{pf} \;\; \forall f \in \mathcal{F}.$

With this notation and these decision variables, the decision maker seeks to solve the following optimization problem, which we refer to as the \textit{Production Planning under Demand and Endogenus Supply Uncertainty Problem} (PP-DESUP).
\begin{align}
\label{eq:model:obj}v_{PP-DESUP}^{*} = \max~&-\sum_{p\in\mathcal{P}}\sum_{f\in\mathcal{F}}C_{pf}x_{pf}+\sum_{p\in\mathcal{P}}\sum_{d\in\mathcal{D}_p}\delta_{pd}\Bigg[\sum_{s\in\mathcal{S}_{pd}}\pi_{sd}\bigg(P_p w_{pds}+O_{p}o_{pds}\bigg)\Bigg]
\end{align}
subject to
\begin{align}
\label{eq:model:c0} & \sum_{p \in P} x_{pf} \leq B_{f} & \forall f \in \mathcal{F}, \\
\label{eq:model:c1} &   \sum_{l\in\mathcal{L}_{pf}}y_{pfl} =1&\forall p\in\mathcal{P}, f\in\mathcal{F},  \\
\label{eq:model:c2} &   \sum_{l\in\mathcal{L}_{pf}}L_{pfl}y_{pfl} \leq x_{pf}\leq \sum_{l\in\mathcal{L}_{pf}}U_{pfl}y_{pfl}&\forall p\in\mathcal{P}, f\in\mathcal{F}, \\ 
\label{eq:model:c3}    & \sum_{d\in\mathcal{D}_p}\delta_{pd} = 1 & \forall p\in\mathcal{P}, \\
\label{eq:model:c4} &\sum_{f\in\mathcal{F}}y_{p,f,l(p,f,d)}\geq |\mathcal{F}|\delta_{pd} &\forall p\in\mathcal{P}, d\in \mathcal{D}_p, \\
\label{eq:model:c5}  &z_{pds}= \sum_{f\in\mathcal{F}}Y_{pfds}x_{pf} & \forall p\in\mathcal{P}, d\in\mathcal{D}_p, s\in\mathcal{S}_{pd}, \\
\label{eq:model:c6}  &w_{pds}\leq D_{pds}&\forall p\in\mathcal{P}, d\in\mathcal{D}_p, s\in\mathcal{S}_{pd},\\
\label{eq:model:c7}    &w_{pds} + o_{pds} = z_{pds} & \forall p\in\mathcal{P}, d\in\mathcal{D}_p, s\in\mathcal{S}_{pd}, \\
\label{eq:model:fs_dvx} & x_{pf} \geq 0  &\forall p\in\mathcal{P}, f\in\mathcal{F}, \\ 
\label{eq:model:fs_dvy} & y_{pfl} \in \{0,1\} &\forall p\in\mathcal{P}, f\in\mathcal{F}, l \in L_{pf}, \\
\label{eq:model:fs_dvd} & \delta_{pd} \in \{0,1\} &\forall p\in\mathcal{P}, d \in \mathcal{D}_{p}, \\
\label{eq:model:ss_dvs} & z_{pds} \geq 0, w_{pds} \geq 0, o_{pds} \geq 0 & \forall p \in \mathcal{P}, d \in \mathcal{D}_{p}, s \in \mathcal{S}_{pd}.
\end{align}
The objective (\ref{eq:model:obj}) seeks to maximize expected profit, which is computed as the difference between the expected revenues earned by selling inventory at full or discounted prices and the cost of manufacturing inventory. Constraints (\ref{eq:model:c0}) ensure the total capacity of each facility is observed. Constraints (\ref{eq:model:c1}) ensure that a production level is chosen for each product at each facility. Constraints (\ref{eq:model:c2}) ensures that the amount allocated to a facility falls within the bounds defined by the chosen production level. Constraints (\ref{eq:model:c3}) ensure that a single distribution is used to represent yield and demand uncertainties for each product. Constraints (\ref{eq:model:c4}) ensure that the appropriate production levels are chosen for each product at each facility for the yield distribution used. Constraints (\ref{eq:model:c5}) compute the amount of available inventory of a product in a given scenario. Constraints (\ref{eq:model:c6}) limit the amount of inventory of a product that is sold at full price in a scenario by the demand for that product in that scenario. Constraints (\ref{eq:model:c7}) ensure that each unit of available inventory is sold either at full or discounted price. Constraints (\ref{eq:model:fs_dvx}) - (\ref{eq:model:fs_dvd}) define the first stage decision variables and their domains. Constraints (\ref{eq:model:ss_dvs}) define the second stage decision variables and their domains.

We note that the right-hand-side of constraints (\ref{eq:model:c2}) can be tightened by replacing $U_{pfl}$ with $\min\{B_{f},U_{pfl}\}.$ 
The objective function of the model just presented contains the bilinear terms $\delta_{pd}w_{pds}$ and $\delta_{pd}o_{pds}.$ Since the $\delta$ variables are binary and the $w$ and $o$ variables continuous and bounded, an exact linearization can be obtained using McCormic inequalities, see \cite{Mcc76,AlkF83}. We illustrate the procedure for doing so in \Cref{app:linearized_pp_desup}.

\section{Benders-based method}
\label{sec:benders}
We propose an iterative, Benders-based solution approach to solve instances of the PP-DESUP presented in the previous section. The approach is based on repeatedly solving an optimization problem that is initially a relaxation of the PP-DESUP. This relaxation is formulated with a single variable for each product that serves as an approximation of the expected revenues from selling finished goods inventory of that product. During the solution process the values of approximation variables in a solution to this relaxation can over-estimate the expected revenues possible given the amounts of products allocated to facilities. When an expected revenue over-estimation for a product occurs, the corresponding allocations for that product are used to define an optimality cut that corrects the approximation value. The cut generated for product revenue approximations are added to the relaxation and the solution process repeats. 

In this section, we first present a reformulation of the PP-DESUP that serves as the basis of the relaxation solved by the Benders-based method. We then present that relaxation. After presenting the relaxation, we present optimality cuts that are tight at the point where they are separated and yield a valid upper bound elsewhere. We then present the overall algorithm and a proof that if will converge in finitely many iterations. We finish the section with additional valid inequalities for strengthening the relaxation solved at each iteration of the proposed solution approach.

\subsection{Reformulation}
\label{subsec:mp}
We first reformulate the PP-DESUP based on the observation that computing expected revenues, given values for the $x$ and $y$ variables, is separable by product. Thus, we let $Q_{p}(x,y)$ define the optimal expected revenues generated from product $p \in \mathcal{P}$ given the facility production levels indicated by the $y$ variables and facility allocations prescribed by the $x$ variables. We label this problem, which we refer to as our master problem, as MP. The MP is as follows.

\begin{align}\label{eq:mpv1}
  v_{MP}^{*} = \max  \left\{- \sum_{f \in \mathcal{F}} C_{pf}x_{pf} + \sum_{p \in \mathcal{P}} Q_{p}(x,y) \vert  \eqref{eq:model:c0},\eqref{eq:model:c1},\eqref{eq:model:c2},\eqref{eq:model:fs_dvx},\eqref{eq:model:fs_dvy}\right\}
\end{align}

To compute $Q_{p}(\bar{x},\bar{y})$ for product $p \in \mathcal{P}$ given the values $\bar{y}$ for the $y$ variables and $\bar{x}$ for the $x$ variables we first note that we can determine the appropriate distribution $d(\bar{y}) \in \mathcal{D}_{p}$ given the $\bar{y}$ values. Then, given the distribution $d(\bar{y})$, for each scenario $s \in \mathcal{S}_{pd(\bar{y})}$ in $d(\bar{y})$ and the values $\bar{x}$ for the $x$ variables we can determine the finished goods inventory level  $\bar{z}_{pd(\bar{y})s} = \sum_{f \in \mathcal{F}} Y_{pfd(\bar{y})s}\bar{x}_{pf}.$ As such, given the inventory level $\bar{z}_{pd(\bar{y})s},$ we can determine the revenue for that product, distribution, and scenario by solving the following optimization problem.
\begin{subequations}
\label{eq:spv1}
\begin{align}
    \label{eq:sub:mrh:obj} & Q_{pd(\bar{y})s}(\bar{z}) = \max~ P_p w_{pd(\bar{y})s}+O_{p}o_{pd(\bar{y})s}\\
\label{eq:subproblem:c5}  &w_{pd(\bar{y})s} + o_{pd(\bar{y})s}= \bar{z}_{pd(\bar{y})s} \\
\label{eq:subproblem:c6}  &w_{pd(\bar{y})s}\leq D_{pd(\bar{y})s} \\
\label{eq:subproblem:c7} & w_{pd(\bar{y})s},o_{pd(\bar{y})s} \geq 0 
\end{align}
\end{subequations}
Clearly, $Q_{p}(\bar x, \bar y)$ is feasible for all values of $\bar x, \bar y$ that are feasible for MP. However, we note that this problem can be solved by inspection. Namely, we have
\begin{equation}
   Q_{pd(\bar{y})s}(\bar{z}) = \begin{cases} 
    \bigg(P_p D_{ps} + O_{p}(\bar{z}_{pd(\bar{y})s} - D_{ps}) \bigg) & \mbox{ if } D_{pd(\bar{y})s} \leq \bar{z}_{pd(\bar{y})s} \\
    \bigg(P_p \bar{z}_{pd(\bar{y})s} \bigg) & \mbox{ if } D_{pd(\bar{y})s} > \bar{z}_{pd(\bar{y})s}
    \end{cases}
\end{equation}
Thus, we have $Q_{p}(\bar x, \bar y) = \sum_{s \in\mathcal{S}_{pd(\bar{y})}} \pi_{sd(\bar{y})}  Q_{pd(\bar{y})s}(\bar{z})$ where $\bar{z}_{pd(\bar{y})s} = \sum_{f \in \mathcal{F}} Y_{pfd(\bar{y})s}\bar{x}_{pf}.$ With $Q_{p}(x,y)$ defined in this manner, it is clear that MP is a valid reformulation of PP-DESUP and thus we have $v_{MP}^{*} = v_{PP-DESUP}^{*}.$ We next propose a relaxation of this master problem that is solved in the Benders-based scheme we propose. 

\subsection{Relaxed master problem}
\label{subsec:rmp}
The Benders-based scheme solves an optimization problem that is a relaxation of the MP presented above. To formulate this optimization problem, we let the decision variable $\mu_{p}$, $p \in \mathcal{P}$, serve as an approximation of the expected revenue $Q_{p}(x,y)$ earned from that product. With these additional decision variables, we have the following relaxed master problem, which we refer to as RMP.

\begin{align}
\label{eq:rmpv1}  v^{*}_{RMP} = \max \left\{- \sum_{f \in \mathcal{F}} C_{pf}x_{pf} +  \sum_{p \in \mathcal{P}} \mu_{p} \vert  \eqref{eq:model:c0},\eqref{eq:model:c1},\eqref{eq:model:c2},\eqref{eq:model:fs_dvx},\eqref{eq:model:fs_dvy}, \mu_{p} \geq 0, p \in \mathcal{P}\right\}
\end{align}

Clearly, RMP is a relaxation of the master problem and thus we have $v_{RMP}^{*} \geq v_{MP}^{*}= v_{PP-DESUP}^{*}.$  In addition, it is easy to see that the RMP has an unbounded optimal objective function value as there are no upper bounds on the approximation variables, $\mu_{p}.$  However, we next derive an upper bound on these values that can be added to RMP to ensure it has a finite optimal objective function value. 

First, we observe that for each product $p \in \mathcal{P}$, distribution $d \in \mathcal{D}_{p}$, and scenario $s \in \mathcal{S}_{pd}$ we have that the quantity 
$$\bar{Z}_{pds}=\sum_{f\in\mathcal{F}}Y_{pfds}\min\bigg\{B_f,U_{p,f,l(p,f,d)}\bigg\}$$ 
is an upper bound on the available finished goods inventory for that product in that scenario for that distribution. Namely, we have that $\bar{Z}_{pds} \geq \bar z_{pds},$ the actual finished goods inventory of product $p$ in scenario $s$ of distribution $d$, always holds. The logic behind this claim is as follows.

Consider a given facility $f$ and distribution $d$. Note that as we have a specific distribution $d$ in mind, we know the production level $l(p,f,d)$ for $p$ that must be observed at facility $f$ for distribution $d$ to be in effect. That level in turn induces an upper bound, $U_{p,f,l(p,f,d)}$ on how much of $p$ can be ordered from facility $f.$ Thus, when distribution $d$ is in effect we have that $x_{pf} \leq \min\bigg\{B_f,U_{p,f,l(p,f,d)}\bigg\}$. Supsequently, in a given scenario $s \in \mathcal{S}_{pd}$ we have that the amount of finished goods inventory produced at facility $f$ can be at most $Y_{pfds}\min\bigg\{B_f,U_{p,f,l(p,f,d)}\bigg\}$. As this logic is valid for all facilities, we have our claim.  With $\bar{Z}_{pds}$ providing an upper bound on the finished goods inventory we have that 
$$P_{p}\min\bigg\{D_{ps},\bar{Z}_{pds}\bigg\}+O_{ps}\max\bigg\{\bar{Z}_{pds}-D_{pds},0\bigg\}$$
is an upper bound on the revenue earned on that product in that scenario. Thus, we have the following bound on the expected revenue for product $p \in \mathcal{P}$ over all scenarios and distributions.  
\begin{equation}\label{eq:bnd}
\mu_{p} \leq  \max_{d \in \mathcal{D}_{p}} \bigg\{\sum_{s \in \mathcal{S}_{sd}} \pi_{sd} \bigg(P_{p}\min\big\{D_{pds},\bar{Z}_{pds}\big\}+O_{ps}\max\big\{\bar{Z}_{pds}-D_{pds},0\big\}\bigg)\bigg\}  = M_{p} \;\; \forall p \in \mathcal{P} \tag{Bnd-p}
\end{equation}
We label this upper bound for product $p$ as $M_{p}$. By adding constraints (\ref{eq:bnd}) to RMP we will have $v^{*}_{RMP} < \infty.$ However, it may still be the case that $v^{*}_{RMP} > v^{*}_{MP}.$ Namely, that for the solution $(\bar{x},\bar{y},\bar{\mu})$ to the RMP, the value $\bar{\mu_{p}}$ of the expected revenue approximation variable for product $p$ over-estimates the actual expected revenue $Q_{p}(\bar{x},\bar{y})$. To address this case, we next describe an optimality cut that is added to RMP to render $(\bar{x},\bar{y},\bar{\mu})$ infeasible, effectively correcting the value of the approximation $\mu$. Finally, we note that any solution to RMP yields a feasible second-stage solution $(w,o)$ and is thus not necessary to address feasibility, e.g., by means of so called feasibility cuts.

\subsection{Optimality cuts}
\label{subsec:opt_cut}
We next present the key ingredient of our decomposition, namely an optimality cut we generate from a solution $\big(\bar{x},\bar{y},\bar{\mu}\big)$ that is used to refine the RMP relaxation. 

Before we introduce the optimality cut we set up some necessary additional notation. We recall from \Cref{sec:problem_model} that $l(p,f,d)$ represents the production level $l \in \mathcal{L}_{pf}$ at facility $f$ which is necessary to enforce distribution $d$ on product $p$. Further, we recall that given a solution ($\bar{x},\bar{y}$) to RMP, for product $p \in \mathcal{P}$ we can derive the distribution $d(\bar{y})$ enforced. We can also derive the exact finished goods inventory level $\bar{z}_{pd(\bar{y})s}$ for each scenario in $\mathcal{S}_{pd(\bar{y})}$. Given these inventory levels, the inequality is based on partitioning the set of scenarios $\mathcal{S}_{pd(\bar{y})}$ into two sets. The first, $\mathcal{S}^{\uparrow}_{pd(\bar{y})}(\bar{x},\bar{y}) \subseteq \mathcal{S}_{pd(\bar{y})}$ consists of scenarios $s$ such that $D_{pd(\bar{y})s} < \bar{z}_{pd(\bar{y})s}$. In other words, in these scenarios, there is finished goods inventory in excess of demand. In the second set of scenarios, $\mathcal{S}^{\downarrow}_{pd(\bar{y})}(\bar{x},\bar{y})$ we have $D_{pd(\bar{y})s} \geq \bar{z}_{pd(\bar{y})s}$ meaning all finished goods inventory can be sold at full price. Effectively, $\mathcal{S}_{pd(\bar{y})} = \mathcal{S}^{\uparrow}_{pd(\bar{y})}(\bar{x},\bar{y}) \cup \mathcal{S}^{\downarrow}_{pd(\bar{y})}(\bar{x},\bar{y})$ and $\mathcal{S}^{\uparrow}_{pd(\bar{y})}(\bar{x},\bar{y}) \cap \mathcal{S}^{\downarrow}_{pd(\bar{y})}(\bar{x},\bar{y})=\emptyset$. With these sets we can express the expected revenue of a decision ($\bar{x},\bar{y}$) for all $p \in \mathcal{P}$ as
\begin{align}
\label{eq:eval}
Q_{p}(\bar{x},\bar{y}) =&  \sum_{s \in \mathcal{S}^{\downarrow}_{pd(\bar{y})}(\bar{x},\bar{y})  }  \pi_{sd(\bar{y})} \bigg(P_p \sum_{f \in \mathcal{F}}Y_{pfd(\bar{y})s}\bar{x}_{pf} \bigg)   \\ \nonumber  
+&\sum_{s \in \mathcal{S}^{\uparrow}_{pd(\bar{y})}(\bar{x},\bar{y})} \pi_{sd(\bar{y})}\bigg(O_p\sum_{f\in\mathcal{F}}Y_{pfd(\bar{y})s}\bar{x}_{pf}+(P_p-O_p)D_{pd(\bar{y})s}\bigg)
\end{align}
Observe that in the first term of the right-hand side all inventory is sold at full price while in the second term all inventory is sold at the discounted price $O_{p}$ and only for the amount for which there is demand is the additional $P_{p}-O_{p}$ revenue earned. 

With these items defined, the following propositions introduce an optimality cut and prove its validity.

\begin{proposition}\label{prop:oc:cut}
Let $\big(\bar{x},\bar{y},\bar{\mu}\big)$ be a solution to RMP for which 
$$\bar{\mu}_p > Q_{p}(\bar{x},\bar{y})$$
for some $p\in\mtc{P}$. Then solution $\big(\bar{x},\bar{y},\bar{\mu}\big)$ violates optimality cut 

\begin{align}
\label{eq:single_cut:v1}  \mu_p \leq & \sum_{s \in \mathcal{S}^{\downarrow}_{pd(\bar{y})}(\bar{x},\bar{y})} \pi_{sd(\bar{y})} \bigg(P_p \sum_{f \in \mathcal{F}}Y_{pfd(\bar{y})s}x_{pf} \bigg) \\ \nonumber & +\sum_{s \in \mathcal{S}^{\uparrow}_{pd(\bar{y})}(\bar{x},\bar{y})} \pi_{sd(\bar{y})}\bigg(O_p\sum_{f\in\mathcal{F}}Y_{pfd(\bar{y})s}x_{pf}+(P_p-O_p)D_{pd(\bar{y})s}\bigg)\\
    \nonumber  &+ M_{p}\bigg(\vert \mathcal{F} \vert - \sum_{f \in \mathcal{F}} y_{p,f,l(p,f,d(\bar{y}))}\bigg) 
\end{align}
\end{proposition}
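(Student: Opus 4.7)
The plan is to show that when the right-hand side of the inequality in the proposition is evaluated at $(x,y)=(\bar x,\bar y)$, it collapses to exactly $Q_p(\bar x,\bar y)$; combined with the hypothesis $\bar\mu_p > Q_p(\bar x,\bar y)$, this immediately establishes the violation.

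First I would handle the third term on the right-hand side, $M_p\bigl(|\mathcal{F}| - \sum_{f\in\mathcal{F}} y_{p,f,l(p,f,d(\bar y))}\bigr)$. By the very definition of $d(\bar y)$ as the distribution enforced by the first-stage choices $\bar y$, constraint \eqref{eq:model:c4} forces $\bar y_{p,f,l(p,f,d(\bar y))} = 1$ for every $f \in \mathcal{F}$ whenever $\bar\delta_{pd(\bar y)} = 1$ (which is implicit in saying that $d(\bar y)$ is the enforced distribution). Consequently, at $(x,y)=(\bar x,\bar y)$ the bracketed quantity vanishes, and the whole penalty term equals zero.

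Next I would evaluate the first two summations at $x=\bar x$. Because the partition $\mathcal{S}^{\downarrow}_{pd(\bar y)}(\bar x,\bar y) \cup \mathcal{S}^{\uparrow}_{pd(\bar y)}(\bar x,\bar y) = \mathcal{S}_{pd(\bar y)}$ is defined precisely by whether $D_{pd(\bar y)s} \geq \bar z_{pd(\bar y)s}$ or $D_{pd(\bar y)s} < \bar z_{pd(\bar y)s}$, these two sums are by construction the two pieces of $Q_p(\bar x,\bar y)$ exhibited in equation \eqref{eq:eval}. That is, the first two terms together equal $Q_p(\bar x,\bar y)$. Putting the three contributions together, the right-hand side of \eqref{eq:single_cut:v1} evaluated at $(\bar x,\bar y)$ is $Q_p(\bar x,\bar y) + 0 = Q_p(\bar x,\bar y)$, which by hypothesis is strictly smaller than $\bar\mu_p$. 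Hence $(\bar x,\bar y,\bar\mu)$ violates \eqref{eq:single_cut:v1}, which is what was to be shown.

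There is no real obstacle here; the only subtlety is bookkeeping, namely being careful that $d(\bar y)$ is well-defined under the feasibility of $(\bar x,\bar y,\bar\mu)$ for RMP (so that all three terms can be evaluated unambiguously), and that the partition defining $\mathcal{S}^{\downarrow}$ and $\mathcal{S}^{\uparrow}$ is exactly the one used in the closed form of $Q_{pd(\bar y)s}(\bar z)$ derived from \eqref{eq:spv1}. Both points are immediate from the definitions given earlier in Sections \ref{subsec:mp} and \ref{subsec:opt_cut}, so the argument reduces to a direct substitution. The genuinely interesting content of the cut (its validity away from $(\bar x,\bar y)$) is handled in the subsequent proposition, not here.
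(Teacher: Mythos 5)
Your proposal is correct and follows essentially the same route as the paper's proof: substitute $(\bar{x},\bar{y},\bar{\mu})$ into \eqref{eq:single_cut:v1}, note that the $M_p$ penalty term vanishes because $\bar{y}_{p,f,l(p,f,d(\bar{y}))}=1$ for all $f\in\mathcal{F}$, and observe that the remaining two sums are exactly $Q_p(\bar{x},\bar{y})$ as given in \eqref{eq:eval}, so the hypothesis $\bar{\mu}_p > Q_p(\bar{x},\bar{y})$ yields the violation. No gaps; the bookkeeping remarks about $d(\bar{y})$ and the scenario partition match the paper's setup.
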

\begin{proof}
  \itshape
By substituting $\big(\bar{x},\bar{y},\bar{\mu}\big)$ in \eqref{eq:single_cut:v1} we observe that the last term of the right-hand side becomes null as $\sum_{f \in \mathcal{F}} y_{p,f,l(p,f,d(\bar{y}))}=|\mathcal{F}|$ for the distribution $d(\bar{y})$ enforced by $\bar{y}$. That is, $\bar{y}_{p,f,l(p,f,d)}=1$ for all $f\in\mtc{F}$, whenever $d$ is the distribution enforced by $\bar{y}$, i.e., $d(\bar{y})$.
Thus, the right-hand side reduces to $Q_{p}(\bar{x},\bar{y})$, see \eqref{eq:eval}, and we obtain 
$$\bar{\mu}_p \leq Q_{p}(\bar{x},\bar{y})$$
proving that the optimality cut is violated by solution  $\big(\bar{x},\bar{y},\bar{\mu}\big)$. 
\end{proof}

Proposition \ref{prop:oc:cut} introduces optimality cut \eqref{eq:single_cut:v1} and effectively allows us to use a Benders-based approach where, upon finding a solution $\big(\bar{x},\bar{y},\bar{\mu}\big)$ where $\bar{\mu}$ overestimates the true expected revenue, one can cut off the solution by adding cut \eqref{eq:single_cut:v1} to RMP. The following corollary clarifies that optimality cut \eqref{eq:single_cut:v1} not only prevents the solution at which it has been generated, but all solutions in a neighborhood of production volumes wherein the relationship between finished goods inventory and product demands remains the same in each scenario. 

\begin{corollary}\label{cor:oc:otherx}
Let $\big(\bar{x},\bar{y},\bar{\mu}\big)$ be a solution to RMP for which 
$$\bar{\mu}_p > Q_{p}(\bar{x},\bar{y})$$
for some $p'\in\mtc{P}$. Assume an optimality cut \eqref{eq:single_cut:v1} is generated and added to RMP.
Let $\big(\hat{x},\bar{y},\hat{\mu}\big)$ be a different solution to RMP (note that the $y$ component is the same as for the previous solution) for which 
$$\hat{\mu}_{p'} > Q_{p'}(\hat{x},\bar{y})$$
and for which 
$$\mathcal{S}^{\uparrow}_{p',d(\bar{y})}(\hat{x},\bar{y}) =\mathcal{S}^{\uparrow}_{p',d(\bar{y})}(\bar{x},\bar{y}) $$
i.e., the change from $\bar{x}$ to $\hat{x}$ does not change the partition of $\mathcal{S}_{p',d(\bar{y})}$.
Then the optimality cut generated for 
$\big(\bar{x},\bar{y},\bar{\mu}\big)$
is violated by $\big(\hat{x},\bar{y},\hat{\mu}\big)$.
\end{corollary}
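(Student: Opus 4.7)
The plan is to substitute the new candidate solution $\big(\hat{x},\bar{y},\hat{\mu}\big)$ into cut \eqref{eq:single_cut:v1} (which was generated at $\big(\bar{x},\bar{y},\bar{\mu}\big)$) and show, by essentially replaying the algebra of \Cref{prop:oc:cut}, that the right-hand side reduces to $Q_{p'}(\hat{x},\bar{y})$, after which the hypothesis $\hat{\mu}_{p'}>Q_{p'}(\hat{x},\bar{y})$ produces the violation. The work consists of accounting for three pieces of the right-hand side: the $M_p$ correction term, the partition-indexed sums, and the comparison with $\hat{\mu}_{p'}$.

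First I would observe that the $y$-component of the two solutions is identical, so the distribution enforced is the same, namely $d(\bar{y})$. In particular $\sum_{f\in\mathcal{F}} \bar{y}_{p',f,l(p',f,d(\bar{y}))}=|\mathcal{F}|$ still holds, so the term $M_{p'}\bigl(|\mathcal{F}|-\sum_{f\in\mathcal{F}} y_{p',f,l(p',f,d(\bar{y}))}\bigr)$ evaluates to zero, exactly as in the proof of \Cref{prop:oc:cut}. This is the easy part.

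The key step is the analysis of the first two sums in \eqref{eq:single_cut:v1}. These sums are indexed by the partition $\mathcal{S}^{\downarrow}_{p',d(\bar{y})}(\bar{x},\bar{y})$ and $\mathcal{S}^{\uparrow}_{p',d(\bar{y})}(\bar{x},\bar{y})$ computed at the original solution, which is the partition hard-coded into the cut. By assumption, $\mathcal{S}^{\uparrow}_{p',d(\bar{y})}(\hat{x},\bar{y})=\mathcal{S}^{\uparrow}_{p',d(\bar{y})}(\bar{x},\bar{y})$, and since the two sets partition $\mathcal{S}_{p',d(\bar{y})}$ the corresponding identity also holds for $\mathcal{S}^{\downarrow}$. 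Hence the partition that would have been used to \emph{evaluate} $Q_{p'}(\hat{x},\bar{y})$ via formula \eqref{eq:eval} coincides with the partition embedded in the cut. Plugging $\hat{x}$ into the two sums therefore yields
\[
\sum_{s \in \mathcal{S}^{\downarrow}_{p',d(\bar{y})}(\hat{x},\bar{y})} \pi_{sd(\bar{y})} \Bigl(P_{p'} \sum_{f \in \mathcal{F}}Y_{p'fd(\bar{y})s}\hat{x}_{p'f}\Bigr)
+\sum_{s \in \mathcal{S}^{\uparrow}_{p',d(\bar{y})}(\hat{x},\bar{y})} \pi_{sd(\bar{y})}\Bigl(O_{p'}\sum_{f\in\mathcal{F}}Y_{p'fd(\bar{y})s}\hat{x}_{p'f}+(P_{p'}-O_{p'})D_{p'd(\bar{y})s}\Bigr),
\]
which by \eqref{eq:eval} equals $Q_{p'}(\hat{x},\bar{y})$.

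Combining the two observations, the cut evaluated at $\big(\hat{x},\bar{y},\hat{\mu}\big)$ reads $\hat{\mu}_{p'}\leq Q_{p'}(\hat{x},\bar{y})$, which contradicts the standing assumption $\hat{\mu}_{p'}>Q_{p'}(\hat{x},\bar{y})$, so the cut is indeed violated. The main conceptual obstacle, and essentially the only nontrivial point, is recognizing that a cut constructed at $\bar{x}$ \emph{stays valid as an equality of right-hand side with the true expected revenue} not only at $\bar{x}$ itself, but throughout the entire region of $x$-values that preserve the partition of $\mathcal{S}_{p',d(\bar{y})}$ into excess-supply and unmet-demand scenarios; everything else is a restatement of the argument in \Cref{prop:oc:cut}.
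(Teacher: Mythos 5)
Your proposal is correct and follows essentially the same route as the paper's own proof: substitute $\big(\hat{x},\bar{y},\hat{\mu}\big)$ into the cut, note that the unchanged $\bar{y}$ kills the $M_{p'}$ term, and use the assumed equality of partitions to recognize the remaining right-hand side as exactly $Q_{p'}(\hat{x},\bar{y})$ via \eqref{eq:eval}, contradicting $\hat{\mu}_{p'}>Q_{p'}(\hat{x},\bar{y})$. No gaps; your write-up is just slightly more explicit than the paper about why the $M_{p'}$ term vanishes and why the two partitions coincide.
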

\begin{proof}
\itshape
  By substituting for $\big(\hat{x},\bar{y},\hat{\mu}\big)$ in the optimality cut added we obtain 
\begin{align*}
\hat{\mu}_{p'} \leq & \sum_{s \in \mathcal{S}^{\downarrow}_{p'd(\bar{y})}(\bar{x},\bar{y})} \pi_{sd(\bar{y})} \bigg(P_{p'} \sum_{f \in \mathcal{F}}Y_{p'fd(\bar{y})s}\hat{x}_{p'f} \bigg) \\ \nonumber 
+~& \sum_{s \in \mathcal{S}^{\uparrow}_{p'd(\bar{y})}(\bar{x},\bar{y})} \pi_{sd(\bar{y})}\bigg(O_{p'}\sum_{f\in\mathcal{F}}Y_{p'fd(\bar{y})s}\hat{x}_{p'f}+(P_{p'}-O_{p'})D_{p'd(\bar{y})s}\bigg)\\
    = &~ Q_{p'}(\hat{x},\bar{y})
\end{align*}
which is violated since $\hat{\mu}_{p'} > Q_{p'}(\hat{x},\bar{y})$. 
\end{proof}
Corollary \ref{cor:oc:otherx} illustrates that once an optimality cut is added to RMP, in addition to cutting off a solution which bears an overestimation of the revenue, it enforces an exact estimate of the expected revenue for all other solutions $x$, provided that the resulting finished goods inventory level yields the same partition of $\mathcal{S}_{pd(\bar{y})}$ as production level $\bar x$. 
We can also conclude from Corollary \ref{cor:oc:otherx} that there are finitely many inequalities \eqref{eq:single_cut:v1}. We observe that for each product $p$ and distribution $d$, the resulting finished goods inventory level in each scenario can either exceed demand or not. Thus, the total number of inequalities \eqref{eq:single_cut:v1} needed to guarantee $v_{RMP} = v_{MP}$ is $\sum_{p\in\mtc{P}}\sum_{d\in\mtc{D}_p}2^{|\mtc{S}_{pd}|}.$

We next prove that inequality \eqref{eq:single_cut:v1} is not violated by a solution to RMP from which an optimal solution to PP-DESUP can be derived. To do so, we consider a solution $(x^*,y^*,\mu^*)$ to RMP for which it holds that $\mu^*_p = Q_{p}(x^*,y^*)$ for some $p \in \mtc{P}.$  Examining the possible relationship between $(\bar{x},\bar{y})$ and $(x^*,y^*)$ we see there are three cases (for the given $p$) to consider. Namely,
\begin{enumerate}
    \item $y_{pfl}^* = \bar{y}_{pfl} \;\; \forall f \in \mathcal{F}, l \in \mathcal{L}_{pf}$ and $x_{pf}^{*} = \bar{x}_{pf} \;\; \forall f \in \mathcal{F}$,    
    \item $y_{pfl}^* \neq \bar{y}_{pfl}$ for some  $f \in \mathcal{F}$ and $l \in \mathcal{L}_{pf}$,
    \item $y_{pfl}^* = \bar{y}_{pfl} \;\; \forall f \in \mathcal{F}, l \in \mathcal{L}_{pf}$ and $x_{pf}^*\neq \bar{x}_{pf}$ for some $f \in \mathcal{F}$.
\end{enumerate}
We next prove that in each of these three cases inequality \eqref{eq:single_cut:v1} is satisfied by $(x^*,y^*,\mu^*)$.

\begin{lemma}
  \label{lem:valid_cut_case_1}
  Consider inequality \eqref{eq:single_cut:v1}, generated for some $(\bar{x},\bar{y},\bar{\mu})$ and some $p\in\mathcal{P}$.
  Let $(x^*,y^*,\mu^*)$ be a solution to RMP for which it holds that
  $$\mu^*_p = Q_{p}(x^*,y^*)$$
  for the same $p\in\mtc{P}$, and where $y_{pfl}^* = \bar{y}_{pfl} \;\; \forall f \in \mathcal{F}, l \in \mathcal{L}_{pf}$ and $x_{pf}^{*} = \bar{x}_{pf} \;\; \forall f \in \mathcal{F}$.    
  Then, inequality \eqref{eq:single_cut:v1} is satisfied by $(x^*,y^*,\mu^*)$.
\end{lemma}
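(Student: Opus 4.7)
The plan is to verify the inequality by direct substitution, exploiting the fact that when $(x^*,y^*)$ agrees with $(\bar{x},\bar{y})$ on product $p$, every term of cut \eqref{eq:single_cut:v1} is controlled by quantities already evaluated in the derivation of \eqref{eq:eval}. First I would substitute $(x^*,y^*,\mu^*)$ into the right-hand side of \eqref{eq:single_cut:v1} and isolate the three contributions: the $\mathcal{S}^{\downarrow}$ sum, the $\mathcal{S}^{\uparrow}$ sum, and the big-$M$ penalty term involving $|\mathcal{F}| - \sum_{f} y^*_{p,f,l(p,f,d(\bar{y}))}$.

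Next I would argue that the penalty term vanishes. Since $y^*_{pfl} = \bar{y}_{pfl}$ for every $f$ and $l$, the distribution enforced by $y^*$ on product $p$ is exactly $d(\bar{y})$, so by constraint \eqref{eq:model:c1} we have $y^*_{p,f,l(p,f,d(\bar{y}))} = 1$ for every $f\in\mathcal{F}$. Hence $\sum_f y^*_{p,f,l(p,f,d(\bar{y}))} = |\mathcal{F}|$ and $M_p\bigl(|\mathcal{F}| - \sum_f y^*_{p,f,l(p,f,d(\bar{y}))}\bigr) = 0$, regardless of the value of $M_p$.

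Because $x^*_{pf} = \bar{x}_{pf}$ for all $f$, the two remaining sums on the right-hand side of \eqref{eq:single_cut:v1} coincide termwise with the two sums in the closed-form expression \eqref{eq:eval} for $Q_p(\bar{x},\bar{y})$; in particular the scenario partition $\mathcal{S}^{\downarrow}_{pd(\bar{y})}(\bar{x},\bar{y})$, $\mathcal{S}^{\uparrow}_{pd(\bar{y})}(\bar{x},\bar{y})$ used to generate the cut is exactly the partition induced by $(x^*,y^*)$, so no re-derivation is needed. Combining these observations, the right-hand side of \eqref{eq:single_cut:v1} equals $Q_p(\bar{x},\bar{y}) = Q_p(x^*,y^*)$, and by hypothesis $\mu^*_p = Q_p(x^*,y^*)$, so the inequality is satisfied with equality.

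I do not foresee a real obstacle here: the lemma is essentially a tightness check ensuring the cut is not spuriously violated at the reference point at which it was generated. The only subtlety is noting that the scenario partition does not need to be re-evaluated under $(x^*,y^*)$ because the two solutions agree on product $p$, which is precisely what the hypothesis of the lemma guarantees.
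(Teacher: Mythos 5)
Your proof is correct and follows essentially the same route as the paper's: the big-$M$ term vanishes because $y^*=\bar{y}$ enforces $d(\bar{y})$ (so $\sum_f y^*_{p,f,l(p,f,d(\bar{y}))}=|\mathcal{F}|$), and since $x^*_{pf}=\bar{x}_{pf}$ the scenario partition is unchanged, so the right-hand side collapses to $Q_p(x^*,y^*)$ via \eqref{eq:eval} and the cut holds with equality by hypothesis. (Minor quibble: the vanishing of the penalty term follows from constraint \eqref{eq:model:c4}/the definition of the enforced distribution rather than \eqref{eq:model:c1}, but this does not affect the argument.)
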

\begin{proof}
  \itshape
Since $y_{pfl}^* = \bar{y}_{pfl}$  $\forall f \in \mathcal{F}$, and $l \in \mathcal{L}_{pf}$ the right-hand side of the optimality cut reduces to 
$$\sum_{s \in \mathcal{S}^{\downarrow}_{pd(\bar{y})}(\bar{x},\bar{y})} \pi_{sd(\bar{y})} \bigg(P_p \sum_{f \in \mathcal{F}}Y_{pfd(\bar{y})s}x^{*}_{pf} \bigg)  +\sum_{s \in \mathcal{S}^{\uparrow}_{pd(\bar{y})}(\bar{x},\bar{y})} \pi_{sd(\bar{y})}\bigg(O_p\sum_{f\in\mathcal{F}}Y_{pfd(\bar{y})s}x^{*}_{pf}+(P_p-O_p)D_{pd(\bar{y})s}\bigg)$$ 
In addition, we have that $d(\bar{y}) = d(y^{*})$ and, as $x_{pf}^{*} = \bar{x}_{pf} \;\; \forall f \in \mathcal{F}$, the sets $\mathcal{S}^{\downarrow}_{pd(\bar{y})}(\bar{x},\bar{y})$ and $\mathcal{S}^{\downarrow}_{pd(\bar{y})}(x^*,y^*)$ are the same (as are their complements). Thus, the right-hand-side is equivalent to the right-hand-side of (\ref{eq:eval}) and we obtain 
$$\mu^*\leq  Q_{p}(x^*,y^*)$$
which is true by assumption. Thus, $(x^*,y^*,\mu^*)$ satisfies the optimality cut.
\end{proof}

\begin{lemma}
  \label{lem:valid_cut_case_2}
  Consider inequality \eqref{eq:single_cut:v1}, generated for some $(\bar{x},\bar{y},\bar{\mu})$ and some $p\in\mathcal{P}$.
  Let $(x^*,y^*,\mu^*)$ be a solution to RMP for which it holds that
  $$\mu^*_p = Q_{p}(x^*,y^*)$$
  for the same $p\in\mtc{P}$, and where $y_{pfl}^* \neq \bar{y}_{pfl}$ for some  $f \in \mathcal{F}$ and $l \in \mathcal{L}_{pf}$.
  Then, \eqref{eq:single_cut:v1} is satisfied by $(x^*,y^*,\mu^*)$.
\end{lemma}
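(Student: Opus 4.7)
The plan is to exploit the ``big-$M$'' slack term $M_{p}(|\mathcal{F}|-\sum_{f}y_{p,f,l(p,f,d(\bar y))})$ in inequality \eqref{eq:single_cut:v1}, which is designed precisely to render the cut harmless whenever $y$ does not replicate the production levels that enforce $d(\bar y)$.

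First, I would argue that $\sum_{f\in\mathcal{F}}y^{*}_{p,f,l(p,f,d(\bar y))}\leq |\mathcal{F}|-1$. Since $\bar y$ by construction enforces distribution $d(\bar y)$, we have $\bar y_{p,f,l(p,f,d(\bar y))}=1$ for every $f\in\mathcal{F}$, and by the single-level constraint \eqref{eq:model:c1} this forces $\bar y_{p,f,l}=0$ for all other levels at the same facility. Now pick the specific facility $f$ and level $l$ at which $y^{*}_{pfl}\neq\bar y_{pfl}$. If $l=l(p,f,d(\bar y))$ then $y^{*}_{p,f,l(p,f,d(\bar y))}=0$ directly; otherwise $y^{*}_{pfl}=1$ and the single-level constraint for $y^{*}$ at that facility forces $y^{*}_{p,f,l(p,f,d(\bar y))}=0$ as well. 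In either case the sum drops by at least one, so the slack factor $|\mathcal{F}|-\sum_{f}y^{*}_{p,f,l(p,f,d(\bar y))}$ is at least $1$, and hence the last term of the right-hand side of \eqref{eq:single_cut:v1} is at least $M_{p}$.

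Next I would observe that the first two terms of the right-hand side of \eqref{eq:single_cut:v1} are non-negative: each involves products of the non-negative quantities $\pi_{sd(\bar y)}$, $Y_{pfd(\bar y)s}$, $x^{*}_{pf}$, $D_{pd(\bar y)s}$ and the prices $P_{p},O_{p}$ (with $P_{p}\geq O_{p}\geq 0$ implicit in the model). Consequently the right-hand side of \eqref{eq:single_cut:v1} is bounded below by $M_{p}$.

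Finally, I would invoke the definition of $M_{p}$ in \eqref{eq:bnd}. By construction $M_{p}$ majorises the expected revenue from product $p$ under \emph{any} admissible combination of first-stage decisions, and in particular $Q_{p}(x^{*},y^{*})\leq M_{p}$. Combining with $\mu^{*}_{p}=Q_{p}(x^{*},y^{*})$ gives $\mu^{*}_{p}\leq M_{p}\leq\text{RHS of \eqref{eq:single_cut:v1}}$, so the cut holds. The main (mild) obstacle is the bookkeeping in the first step, where one must be careful that a single coordinate disagreement $y^{*}_{pfl}\neq \bar y_{pfl}$ is sufficient, via \eqref{eq:model:c1}, to drive the key indicator $y^{*}_{p,f,l(p,f,d(\bar y))}$ to zero at the offending facility; the rest is a direct sign/bound argument.
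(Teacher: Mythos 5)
Your proposal is correct and follows essentially the same route as the paper's proof: the key observation that $y^*$ differing from $\bar y$ forces $\sum_{f}y^{*}_{p,f,l(p,f,d(\bar y))}\leq|\mathcal{F}|-1$, so the big-$M$ term makes the right-hand side at least $M_p$, which was already established as an upper bound on $Q_p(x^*,y^*)=\mu^*_p$. You merely spell out two steps the paper leaves implicit (the bookkeeping via constraint \eqref{eq:model:c1} and the non-negativity of the first two terms), which is fine.
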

\begin{proof}
  \itshape
We have $\vert \mathcal{F} \vert - \sum_{f \in \mathcal{F}} y^{*}_{p,f,l(p,f,d(\bar{y}))} \geq 1$ and thus the right-hand side of inequality \eqref{eq:single_cut:v1} is at least as great as $M_{p}$ which was already shown to be a valid upper bound on $Q_{p}(x^{*},y^{*})$. Thus, $(x^*,y^*,\mu^*)$ satisfies the optimality cut.

\end{proof}

\begin{lemma}
  \label{lem:valid_cut_case_3}
  Consider inequality \eqref{eq:single_cut:v1}, generated for some $(\bar{x},\bar{y},\bar{\mu})$ and some $p\in\mathcal{P}$.
  Let $(x^*,y^*,\mu^*)$ be a solution to RMP for which it holds that
  $$\mu^*_p = Q_{p}(x^*,y^*)$$
  for the same $p\in\mtc{P}$, and where $y_{pfl}^* = \bar{y}_{pfl}$ for all  $f \in \mathcal{F}$ and $l \in \mathcal{L}_{pf}$, and $x_{pf}^*\neq \bar{x}_{pf}$ for some $f \in \mathcal{F}$.
  Then, inequality \eqref{eq:single_cut:v1} is satisfied by $(x^*,y^*,\mu^*)$.
\end{lemma}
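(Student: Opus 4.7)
The plan is to first dispose of the ``big-$M$'' correction term in \eqref{eq:single_cut:v1}. Since by hypothesis $y^*_{pfl} = \bar{y}_{pfl}$ for all $f\in\mathcal{F}$ and $l\in\mathcal{L}_{pf}$, the distribution enforced by $y^*$ coincides with $d(\bar y)$, so $\sum_{f\in\mathcal{F}} y^*_{p,f,l(p,f,d(\bar y))} = |\mathcal{F}|$ and that term vanishes. It therefore suffices to prove
\begin{align*}
  \mu^*_p \;\leq\; & \sum_{s \in \mathcal{S}^{\downarrow}_{pd(\bar y)}(\bar x,\bar y)} \pi_{sd(\bar y)} \Bigl( P_p \sum_{f\in\mathcal{F}} Y_{pfd(\bar y)s}\, x^*_{pf}\Bigr) \\
  & + \sum_{s \in \mathcal{S}^{\uparrow}_{pd(\bar y)}(\bar x,\bar y)} \pi_{sd(\bar y)} \Bigl(O_p \sum_{f\in\mathcal{F}} Y_{pfd(\bar y)s}\, x^*_{pf} + (P_p - O_p) D_{pd(\bar y)s}\Bigr).
\end{align*}

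Next, I would exploit the per-scenario structure of $Q_{pd(\bar y)s}$. Writing $z^*_s := \sum_{f\in\mathcal{F}} Y_{pfd(\bar y)s}\, x^*_{pf}$ and using the closed form displayed just before \Cref{subsec:rmp}, one checks directly that
$$Q_{pd(\bar y)s}(z^*_s) \;=\; \min\bigl\{P_p z^*_s,\; O_p z^*_s + (P_p - O_p)\, D_{pd(\bar y)s}\bigr\},$$
because the two branches agree at $z^*_s = D_{pd(\bar y)s}$ and, since $P_p > O_p$, the first branch is the smaller one precisely when $z^*_s \leq D_{pd(\bar y)s}$.

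The crucial observation is then that the cut assigns to each scenario $s$ \emph{one} of the two linear expressions in this $\min$, selected according to the partition $(\mathcal{S}^{\downarrow}_{pd(\bar y)}(\bar x,\bar y), \mathcal{S}^{\uparrow}_{pd(\bar y)}(\bar x,\bar y))$ induced by $\bar x$ rather than by $x^*$. Because \emph{both} linear expressions are pointwise upper bounds on their $\min$, the scenario-wise contribution of the cut dominates $Q_{pd(\bar y)s}(z^*_s)$ regardless of which side of the partition $x^*$ would have placed $s$ on. Aggregating with the weights $\pi_{sd(\bar y)}$ and invoking $\mu^*_p = Q_p(x^*, y^*) = \sum_{s\in\mathcal{S}_{pd(\bar y)}} \pi_{sd(\bar y)}\, Q_{pd(\bar y)s}(z^*_s)$ yields the desired inequality.

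The main obstacle, and really the only nontrivial point, is verifying that the ``wrong'' linear piece still gives a valid per-scenario upper bound: this reduces to the elementary inequalities $P_p z \geq O_p z + (P_p - O_p) D$ when $z \geq D$ and its reverse when $z \leq D$, both immediate from $P_p > O_p$. The rest of the argument is aggregation. Note that this case is genuinely different from \Cref{cor:oc:otherx}, where the partition was assumed to be preserved; here $x^*$ may induce a different partition than $\bar x$, and the argument shows the cut remains valid precisely because the assignment of a linear piece on the ``wrong'' side overestimates, rather than underestimates, the true revenue.
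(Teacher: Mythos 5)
Your proof is correct, and it reaches the same conclusion by a noticeably leaner route than the paper. The paper proves the key inequality \eqref{eq:third_case} by splitting $\mathcal{S}_{pd(\bar y)}$ into four sets according to how each scenario is classified under $\bar x$ versus $x^*$, cancelling the two sets where the classifications agree, and then expanding the $B$- and $D$-type terms algebraically to exhibit the sign of $(P_p-O_p)\bigl(D_{pd(\bar y)s}-\sum_f Y_{pfd(\bar y)s}x^*_{pf}\bigr)$. Your observation that the per-scenario revenue equals $\min\{P_p z,\;O_p z+(P_p-O_p)D\}$, so that \emph{either} linear piece evaluated at $z^*_s$ is an upper bound on $Q_{pd(\bar y)s}(z^*_s)$ no matter which side of the $\bar x$-induced partition the scenario sits on, subsumes both of those case computations in one line; the elementary comparisons you cite ($P_p z \gtrless O_p z+(P_p-O_p)D$ according to the sign of $z-D$, using $P_p>O_p$) are exactly the inequalities the paper derives after its expansions. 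You also correctly handle the two preliminary points the paper needs: the big-$M$ term vanishes because $y^*$ agrees with $\bar y$ on product $p$ (so $d(y^*)=d(\bar y)$), and $\mu^*_p=Q_p(x^*,y^*)=\sum_{s}\pi_{sd(\bar y)}Q_{pd(\bar y)s}(z^*_s)$ is just the separable definition of $Q_p$ from the reformulation. What your packaging buys is brevity and a conceptual reading of why the cut is valid (the revenue is a pointwise minimum, i.e.\ concave piecewise-linear in $z$, and the cut always uses one of its supporting linear pieces); what the paper's four-set partition buys is an explicit accounting of exactly which scenarios are over-estimated and by how much, which connects directly to the tightness statement of \Cref{cor:oc:otherx}. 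No gap to report.
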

\begin{proof}
  \itshape
This case is more involved as the sets $\mathcal{S}^{\downarrow}_{pd(\bar{y})}(\bar{x},\bar{y})$ and $\mathcal{S}^{\downarrow}_{pd(\bar{y})}(x^*,y^*)$ (and their complements) may contain different scenarios as $\bar{x}$ and $x^*$ are different and so are, in general, the inventories of finished goods. As such, to show that the optimality cut is satisfied, we need to show that the right-hand side of the cut overestimates the true expected revenue of solution ($x^*,y^*$) for product $p$. That is, we  next show that the following inequality holds
\begin{align}
\label{eq:third_case}
\sum_{s \in \mathcal{S}^{\downarrow}_{pd(\bar{y})}(x^{*},y^{*})} \pi_{sd(\bar{y})} \bigg(P_p \sum_{f \in \mathcal{F}}Y_{pfd(\bar{y})s}x^*_{pf} \bigg) 
     +\sum_{s \in \mathcal{S}^{\uparrow}_{pd(\bar{y})}(x^{*},y^{*})}\pi_{sd(\bar{y})}\bigg(O_p\sum_{f\in\mathcal{F}}Y_{pfd(\bar{y})s}x^*_{pf}+(P_p-O_p)D_{pd(\bar{y})s}\bigg)
    \leq \nonumber \\
    \sum_{s \in \mathcal{S}^{\downarrow}_{pd(\bar{y})}(\bar{x},\bar{y})} \pi_{sd(\bar{y})} \bigg(P_p \sum_{f \in \mathcal{F}}Y_{pfd(\bar{y})s}x^*_{pf} \bigg) 
  +\sum_{s \in \mathcal{S}^{\uparrow}_{pd(\bar{y})}(\bar{x},\bar{y})}\pi_{sd(\bar{y})}\bigg(O_p\sum_{f\in\mathcal{F}}Y_{pfd(\bar{y})s}x^*_{pf}+(P_p-O_p)D_{pd(\bar{y})s}\bigg)
\end{align}
Here the left-hand side of inequality \eqref{eq:third_case} gives $Q_p(x^*,y^*)$: simply note that $\mathcal{S}^{\downarrow}_{pd(\bar{y})}(x^{*},y^{*})=\mathcal{S}^{\downarrow}_{pd(y^*)}(x^{*},y^{*})$ (and so their complements) as $d(\bar{y})=d(y^*)$. 
The right-hand side computes expected revenues using the partition of scenarios induced by solution $\bar{x}$. Particularly, note that in the right-hand side the sums are taken over the sets of scenarios $\mathcal{S}^{\downarrow}_{pd(\bar{y})}(\bar{x},\bar{y}), \mathcal{S}^{\uparrow}_{pd(\bar{y})}(\bar{x},\bar{y})$ induced by $(\bar{x},\bar{y},\bar{\mu})$ but with values from the solution $(x^{*},y^{*})$. 
We will show that \eqref{eq:third_case} holds by proving that in the cases where expected revenues are computed differently for a scenario, the method used on the right-hand side of the inequality (i.e., in the optimality cut) is an over-estimate.

To do so, given the two partitions of the set of scenarios $\mathcal{S}_{pd(\bar{y})}$ induced by the two solutions, each into two sets, we further partition the set of scenarios $\mathcal{S}_{pd(\bar{y})}$ into four sets as follows. 
\begin{enumerate}
    \item $\mathcal{S}_{pd(\bar{y})}^A=\{s\in\mathcal{S}_{pd(\bar{y})}|s\in \mathcal{S}^{\downarrow}_{pd(\bar{y})}(x^{*},y^{*})$ and $s\in \mathcal{S}^{\downarrow}_{pd(\bar{y})}(\bar{x},\bar{y}) \}$
    \item $\mathcal{S}_{pd(\bar{y})}^B=\{s\in\mathcal{S}_{pd(\bar{y})}|s\in \mathcal{S}^{\downarrow}_{pd(\bar{y})}(x^{*},y^{*})$ and $s\in \mathcal{S}^{\uparrow}_{pd(\bar{y})}(\bar{x},\bar{y})\}$
    \item $\mathcal{S}_{pd(\bar{y})}^C=\{s\in\mathcal{S}_{pd(\bar{y})}|s\in \mathcal{S}^{\uparrow}_{pd(\bar{y})}(x^{*},y^{*})$ and $s\in \mathcal{S}^{\uparrow}_{pd(\bar{y})}(\bar{x},\bar{y})\}$
    \item $\mathcal{S}_{pd(\bar{y})}^D=\{s\in\mathcal{S}_{pd(\bar{y})}|s\in \mathcal{S}^{\uparrow}_{pd(\bar{y})}(x^{*},y^{*})$ and $s\in \mathcal{S}^{\downarrow}_{pd(\bar{y})}(\bar{x},\bar{y})\}$
\end{enumerate}
\begin{figure}[htp]
    \centering
    \includegraphics[scale=1]{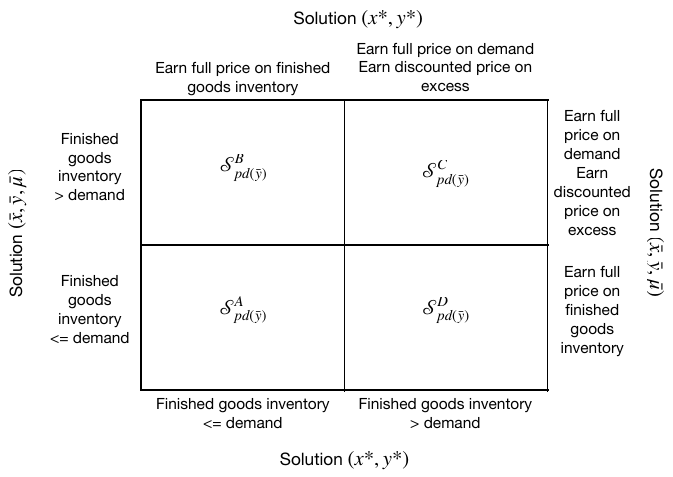}
    \caption{Scenario partition}
    \label{fig:fig_scenario_partition}
\end{figure}
We illustrate this partition in Figure \ref{fig:fig_scenario_partition}, which also depicts how expected revenues would be computed in each set in the partition in each solution. For example, consider a scenario in $\mathcal{S}_{pd(\bar{y})}^A.$ As it is in $\mathcal{S}^{\downarrow}_{pd(\bar{y})}(x^{*},y^{*})$, the expected revenues in the solution $(x^{*},y^{*})$ for this product are computed presuming the full price, $P_p,$ is earned on all finished goods inventory. As the scenario is also in $ \mathcal{S}^{\downarrow}_{pd(\bar{y})}(\bar{x},\bar{y})$, the expected revenues would have been computed in the same manner. Next, consider a scenario in $\mathcal{S}_{pd(\bar{y})}^B.$ Again, the scenario is in $\mathcal{S}^{\downarrow}_{pd(\bar{y})}(x^{*},y^{*})$ and thus the expected revenues in the solution $(x^{*},y^{*})$ for this product are computed presuming the full price, $P_p,$ is earned on all finished goods inventory. However, in the solution $(\bar{x},\bar{y},\bar{\mu})$ to the RMP this scenario was in $\mathcal{S}^{\uparrow}_{pd(\bar{y})}(\bar{x},\bar{y})$. As such, in that solution, expected revenues were computed by first meeting demand with finished goods inventory at full price and then selling whatever inventory is left over at a discount. 

Let us now rewrite \eqref{eq:third_case} using the new partition of the scenarios. We first observe that a scenario $s$ in either $\mathcal{S}_{pd(\bar{y})}^A$ or $\mathcal{S}_{pd(\bar{y})}^C$ evaluates to the same value on each side of inequality \eqref{eq:third_case}. As such, we can rewrite \eqref{eq:third_case} as
\begin{align}
\label{eq:third_case_simplified}
      \sum_{s\in \mathcal{S}^B_{pd(\bar{y})}} \pi_{sd(\bar{y})} \bigg(P_p \sum_{f \in \mathcal{F}}Y_{pfd(\bar{y})s}x^*_{pf} \bigg) 
      +\sum_{s \in \mathcal{S}^D_{pd(\bar{y})}} \pi_{sd(\bar{y})}\bigg(O_p\sum_{f\in\mathcal{F}}Y_{pfd(\bar{y})s}x^*_{pf}+(P_p-O_p)D_{pd(\bar{y})s}\bigg) \leq  \nonumber \\
      \sum_{s \in \mathcal{S}^B_{pd(\bar{y})}} \pi_{sd(\bar{y})}\bigg(O_p\sum_{f\in\mathcal{F}}Y_{pfd(\bar{y})s}x^*_{pf}+(P_p-O_p)D_{pd(\bar{y})s}\bigg)
     +\sum_{s\in \mathcal{S}^D_{pd(\bar{y})}} \pi_{sd(\bar{y})} \bigg(P_p \sum_{f \in \mathcal{F}}Y_{pfd(\bar{y})s}x^*_{pf} \bigg) 
\end{align}

We next compare the terms on each side of inequality \eqref{eq:third_case_simplified} that involve summing over scenarios in $\mathcal{S}^B_{pd(\bar{y})}$.
Observe that the sum over scenarios in $\mathcal{S}^B_{pd(\bar{y})}$ on the right-hand side of \eqref{eq:third_case_simplified} can be rewritten as follows
\begin{align*}
     & \sum_{s \in \mathcal{S}^B_{pd(\bar{y})}}\pi_{sd(\bar{y})}\bigg[ P_pD_{pd(\bar{y})s}+O_p\bigg(\sum_{f\in\mathcal{F}}Y_{pfd(\bar{y})s}x^*_{pf}-D_{pd(\bar{y})s}\bigg)\bigg]\\
     =&\sum_{s \in \mathcal{S}^B_{pd}} \pi_{sd(\bar{y})}\bigg[P_p\bigg(D_{pd(\bar{y})s}+\sum_{f\in\mathcal{F}}Y_{pfd(\bar{y})s}x^*_{pf}-\sum_{f\in\mathcal{F}}Y_{pfd(\bar{y})s}x^*_{pf}\bigg)+O_p\bigg(\sum_{f\in\mathcal{F}}Y_{pfd(\bar{y})s}x^*_{pf}-D_{pd(\bar{y})s}\bigg)\bigg]\\
     =&\sum_{s \in \mathcal{S}^B_{pd(\bar{y})}} \pi_{sd(\bar{y})}\bigg[P_p\bigg(\sum_{f\in\mathcal{F}}Y_{pfd(\bar{y})s}x^*_{pf}\bigg)+P_pD_{pd(\bar{y})s}-P_p\bigg(\sum_{f\in\mathcal{F}}Y_{pfd(\bar{y})s}x^*_{pf}\bigg)+O_p\bigg(\sum_{f\in\mathcal{F}}Y_{pfd(\bar{y})s}x^*_{pf}\bigg)-O_pD_{pd(\bar{y})s}\bigg)\bigg]\\
          =&\sum_{s \in \mathcal{S}^B_{pd(\bar{y})}} \pi_{sd(\bar{y})}\bigg[P_p\bigg(\sum_{f\in\mathcal{F}}Y_{pfd(\bar{y})s}x^*_{pf}\bigg)+P_p\bigg(D_{pd(\bar{y})s}-\sum_{f\in\mathcal{F}}Y_{pfd(\bar{y})s}x^*_{pf}\bigg)-O_p\bigg(D_{pd(\bar{y})s} - \sum_{f\in\mathcal{F}}Y_{pfd(\bar{y})s}x^*_{pf}\bigg)\bigg] \\
          =&\sum_{s \in \mathcal{S}^B_{pd(\bar{y})}} \pi_{sd(\bar{y})}\bigg[P_p\bigg(\sum_{f\in\mathcal{F}}Y_{pfd(\bar{y})s}x^*_{pf}\bigg)+(P_p - O_p)\bigg(D_{pd(\bar{y})s}-\sum_{f\in\mathcal{F}}Y_{pfd(\bar{y})s}x^*_{pf}\bigg)\bigg]
\end{align*}
Next, we recall that scenarios in $\mathcal{S}^B_{pd(\bar{y})}$ are also in $\mathcal{S}^{\downarrow}_{pd(\bar{y})}(x^{*},y^{*})$ and thus we have $D_{pd(\bar{y})s} \geq \sum_{f \in \mathcal{F}}Y_{pfd(\bar{y})s}x_{pf}^{*}$. Given that $P_p>O_p$ we have that
\begin{align*}\
&\sum_{s \in \mathcal{S}^B_{pd(\bar{y})}} \pi_{sd(\bar{y})}P_p\bigg(\sum_{f\in\mathcal{F}}Y_{pfd(\bar{y})s}x^*_{pf}\bigg) \leq \\
&\sum_{s \in \mathcal{S}^B_{pd(\bar{y})}} \pi_{sd(\bar{y})}\bigg[P_p\bigg(\sum_{f\in\mathcal{F}}Y_{pfd(\bar{y})s}x^*_{pf}\bigg)+\underbrace{(P_p - O_p)\bigg(D_{pd(\bar{y})s}-\sum_{f\in\mathcal{F}}Y_{pfd(\bar{y})s}x^*_{pf}\bigg)}_{\geq 0}\bigg]
\end{align*}
showing that the sums over scenarios in $\mathcal{S}^B_{pd(\bar{y})}$ evaluate to a larger value in the right-hand side of \eqref{eq:third_case_simplified} than in the left.  

We next compare the terms on each side of inequality \eqref{eq:third_case_simplified} that involve summing over scenarios in $\mathcal{S}^D_{pd(\bar{y})}$. We observe that the expression on the left-hand side of \eqref{eq:third_case_simplified} that involves summing over scenarios in $\mathcal{S}^D_{pd(\bar{y})}$ can be rewritten as
\begin{align*}
     &\sum_{s \in \mathcal{S}^D_{pd(\bar{y})}} \pi_{sd(\bar{y})}\bigg(O_p\sum_{f\in\mathcal{F}}Y_{pfd(\bar{y})s}x^*_{pf}+(P_p-O_p)D_{pd(\bar{y})s}\bigg)\\
     =&\sum_{s \in \mathcal{S}^D_{pd(\bar{y})}}\pi_{sd(\bar{y})}\bigg[P_p\bigg(D_{pd(\bar{y})s}+\sum_{f\in\mathcal{F}}Y_{pfd(\bar{y})s}x^*_{pf}-\sum_{f\in\mathcal{F}}Y_{pfd(\bar{y})s}x^*_{pf}\bigg)+O_p\bigg(\sum_{f\in\mathcal{F}}Y_{pfd(\bar{y})s}x^*_{pf}-D_{pd(\bar{y})s}\bigg)\bigg]\\
     =&\sum_{s \in \mathcal{S}^D_{pd(\bar{y})}}\pi_{sd(\bar{y})}\bigg[P_p\sum_{f\in\mathcal{F}}Y_{pfd(\bar{y})s}x^*_{pf}+P_pD_{pd(\bar{y})s}-P_p\sum_{f\in\mathcal{F}}Y_{pfd(\bar{y})s}x^*_{pf}+O_p\bigg(\sum_{f\in\mathcal{F}}Y_{pfd(\bar{y})s}x^*_{pf}-D_{pd(\bar{y})s}\bigg)\bigg]\\
     =&\sum_{s \in \mathcal{S}^D_{pd(\bar{y})}}\pi_{sd(\bar{y})}\bigg[P_p\sum_{f\in\mathcal{F}}Y_{pfd(\bar{y})s}x^*_{pf}+(P_p-O_p)D_{pd(\bar{y})s}+(O_p-P_p)\sum_{f\in\mathcal{F}}Y_{pfd(\bar{y})s}x^*_{pf}\bigg]\\
     =&\sum_{s \in \mathcal{S}^D_{pd(\bar{y})}}\pi_{sd(\bar{y})}\bigg[P_p\sum_{f\in\mathcal{F}}Y_{pfd(\bar{y})s}x^*_{pf}+(P_p-O_p)\bigg(D_{pd(\bar{y})s}-\sum_{f\in\mathcal{F}}Y_{pfd(\bar{y})s}x^*_{pf}\bigg)\bigg]\\
\end{align*}
We recall that scenarios in $\mathcal{S}^D_{pd(\bar{y})}$ are also in $\mathcal{S}^{\uparrow}_{pd(\bar{y})}(x^{*},y^{*})$.
As such, we have $\sum_{f \in \mathcal{F}}Y_{pfd(\bar{y})s}x_{pf}^* \geq D_{pd(\bar{y})s},$ or,  $D_{pd(\bar{y})s}- \sum_{f \in \mathcal{F}}Y_{pfd(\bar{y})s}x_{pf}^*\leq 0$. As $P_p>O_p$, we can conclude that 
\begin{align*}
&\sum_{s \in \mathcal{S}^D_{pd(\bar{y})}}\pi_{sd(\bar{y})}\bigg[P_p\sum_{f\in\mathcal{F}}Y_{pfd(\bar{y})s}x^*_{pf}+\underbrace{(P_p-O_p)\bigg(D_{pd(\bar{y})s}-\sum_{f\in\mathcal{F}}Y_{pfd(\bar{y})s}x^*_{pf}\bigg)}_{\leq 0}\bigg]\\
&\leq \sum_{s \in \mathcal{S}^D_{pd(\bar{y})}}\pi_{sd(\bar{y})}\bigg[P_p\sum_{f\in\mathcal{F}}Y_{pfd(\bar{y})s}x^*_{pf}\bigg]
\end{align*}    
where the term on the right-hand side is the corresponding term on the right-hand side of \eqref{eq:third_case_simplified}. This shows that also in the third case the cut is satisfied by the solution and completes the proof. 
\end{proof}

Having addressed each of these cases we are now able to prove that the proposed optimality cut is valid. 
\begin{proposition}
\label{prop:valid_cut}
Let $(x^*,y^*,\mu^*)$ be a solution to RMP for which it holds that
$$\mu^*_p = Q_{p}(x^*,y^*)$$
for some $p\in\mtc{P}$. 
Then, inequality (\ref{eq:single_cut:v1}), generated for some $(\bar{x},\bar{y},\bar{\mu})$ and the same $p$, is satisfied by $(x^*,y^*,\mu^*)$. 
\end{proposition}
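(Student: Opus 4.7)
The plan is to conclude by a straightforward case analysis on the relationship between the solution $(x^*,y^*)$ at which the cut is being tested and the solution $(\bar{x},\bar{y})$ from which the cut (\ref{eq:single_cut:v1}) was generated, restricted to the product $p$. The three lemmas \ref{lem:valid_cut_case_1}, \ref{lem:valid_cut_case_2}, and \ref{lem:valid_cut_case_3} have already been established for the three cases identified earlier in the section, so the proposition follows by combining them once I verify that these cases are mutually exclusive and exhaustive.

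First I would check the case split. For the chosen $p \in \mathcal{P}$, comparing the $y$-components gives two options: either (a) $y^*_{pfl} = \bar{y}_{pfl}$ for every $f \in \mathcal{F}$ and $l \in \mathcal{L}_{pf}$, or (b) $y^*_{pfl} \neq \bar{y}_{pfl}$ for at least one pair $(f,l)$. Case (a) then further splits according to the $x$-components into (a.1) $x^*_{pf} = \bar{x}_{pf}$ for all $f \in \mathcal{F}$, or (a.2) $x^*_{pf} \neq \bar{x}_{pf}$ for some $f \in \mathcal{F}$. These are precisely the three cases covered by Lemmas \ref{lem:valid_cut_case_1}, \ref{lem:valid_cut_case_2}, and \ref{lem:valid_cut_case_3}, respectively, and they clearly partition the possible relationships between the two solutions in the $p$-coordinates.

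Applying the appropriate lemma in each case then immediately yields that $(x^*,y^*,\mu^*)$ satisfies (\ref{eq:single_cut:v1}), which is exactly the claim of the proposition. Since all the delicate analysis---in particular the scenario-partitioning argument in case (a.2), where the sets $\mathcal{S}^{\downarrow}_{pd(\bar{y})}(\bar{x},\bar{y})$ and $\mathcal{S}^{\downarrow}_{pd(\bar{y})}(x^*,y^*)$ may differ and one has to exploit $P_p > O_p$ together with the sign of $D_{pd(\bar{y})s} - \sum_f Y_{pfd(\bar{y})s} x^*_{pf}$ on each side of the refined partition---has already been carried out inside the individual lemmas, there is no real obstacle at this step. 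The proof of the proposition itself is simply a routine consolidation: enumerate the cases, invoke the corresponding lemma, and collect the conclusion.
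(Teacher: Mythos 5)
Your proposal matches the paper's own proof: it establishes the proposition by observing that the three relationships between $(\bar{x},\bar{y})$ and $(x^*,y^*)$ for the given $p$ are exhaustive and then invoking Lemmas \ref{lem:valid_cut_case_1}, \ref{lem:valid_cut_case_2}, and \ref{lem:valid_cut_case_3} for each case. Your additional verification that the case split is a genuine partition is a harmless elaboration of the same argument.
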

\begin{proof}
  \itshape
Consider a solution $(\bar{x},\bar{y},\bar{\mu})$ to RMP and assume it was used to generate cut \eqref{eq:single_cut:v1} for some $p\in\mtc{P}$. Let $(x^*,y^*,\mu^*)$ be a solution to RMP for which
$$\mu^*_p = Q_{p}(x^*,y^*)$$
for the same $p\in\mtc{P}$. There are three possible relationships between $(\bar{x},\bar{y})$ and $(x^*,y^*)$ for the given $p$. They are as follows.
\begin{enumerate}
    \item $y_{pfl}^* = \bar{y}_{pfl} \;\; \forall f \in \mathcal{F}, l \in \mathcal{L}_{pf}$ and $x_{pf}^{*} = \bar{x}_{pf} \;\; \forall f \in \mathcal{F}$,    
    \item $y_{pfl}^* \neq \bar{y}_{pfl}$ for some  $f \in \mathcal{F}$ and $l \in \mathcal{L}_{pf}$,
    \item $y_{pfl}^* = \bar{y}_{pfl} \;\; \forall f \in \mathcal{F}, l \in \mathcal{L}_{pf}$ and $x_{pf}^*\neq \bar{x}_{pf}$ for some $f \in \mathcal{F}$.
\end{enumerate}
Lemmas \ref{lem:valid_cut_case_1}, \ref{lem:valid_cut_case_2}, and \ref{lem:valid_cut_case_3} prove that in each case the solution $(x^*,y^*,\mu^*)$ satisfies the inequality (\ref{eq:single_cut:v1}). As these are the only cases possible, the inequality is valid. 
\end{proof}

Finally, we note the similarity between \eqref{eq:single_cut:v1} and the integer L-shaped method cuts proposed by \cite{LapL93}.
Given a binary first-stage solution $\bar{y}$, the cut generated by \cite{LapL93} reduces to the expected second-stage value of $\bar{y}$ if evaluated in $\bar{y}$, and in a bound otherwise.
Given a first-stage solution $(\bar{x},\bar{y})$, the cut generated by our method reduces to a linear expression in $x$ when evaluated in $\bar{y}$ and in a valid bound otherwise.
The linear expression in $x$ represents an approximation to the expected value as a function of $x$ under the distribution enforced by $\bar{y}$.

\subsection{A convergent algorithm}
\label{subsec:alg_convergence}
With the elements defined in \Cref{subsec:opt_cut} we can now introduce a complete algorithm, which we refer to as the Benders-based method (BBM). The pseudocode is presented in Algorithm \ref{alg:bsc}. In this algorithm, we refer to RMP$^{k}$ as the instance of the RMP solved at iteration $k$, which may include multiple optimality cuts \eqref{eq:single_cut:v1}. The value $v_{MP}^{Best}$ holds the objective function value of the best primal solution the algorithm has found, while $v_{MP}^{k}$ holds the objective function value of the primal solution found at iteration $k$. We note that the values $(\bar{x},\bar{y})$ of a solution to RMP also satisfy the constraints of MP. Thus, constructing a solution to MP from a solution to the RMP only requires evaluating $Q_{p}(\bar{x},\bar{y})$ for each product $p \in \mathcal{P}$.

\begin{algorithm}[htp]
\caption{Benders-based algorithm}\label{alg:bsc}
\begin{algorithmic}
\small
\Require Optimality tolerance $\epsilon$
\Require Time limit $\tau$
\State Set $v_{MP}^{Best}=0$
\State Instantiate RMP$^{0}$ with bound \eqref{eq:bnd} $\forall p \in \mathcal{P}$
\State \texttt{STOP}$\gets$ \texttt{FALSE}
\While{NOT \texttt{STOP}}
	\State Solve RMP$^{k}$ for solution $(\bar{x}^{k},\bar{y}^{k},\bar{\mu}^{k})$ and bound $v_{RMP}^{k}$
	\State Set $v_{MP}^{k} = - \sum_{f \in \mathcal{F}} C_{pf}\bar{x}^k_{pf} $
	\For{$p \in \mathcal{P}$}
        \State Compute $Q_p(\bar{x},\bar{y})$  
		\If{$\bar{\mu}_p>Q_p(\bar{x},\bar{y})$}
			\State Save cut \eqref{eq:single_cut:v1} for $p$
		\EndIf
		\State Set $v_{MP}^{k} = v_{MP}^{k} + Q_{p}(\bar{x},\bar{y})$
	\EndFor
    \If{$v_{MP}^{k} > v_{MP}^{Best}$}
		\State Set $v_{MP}^{Best} = v_{MP}^{k}$
    \EndIf
	\If{No cuts saved}
		\State Set $v_{MP}^{Best} = v_{MP}^{k}$
		\State \texttt{STOP}$\gets$ \texttt{TRUE}
	\Else	
		\State Compute  $gap=(v_{RMP}^{k} - v_{MP}^{Best})/v_{RMP}^{k}$
		\If{$gap \leq \epsilon$ OR \texttt{elapsed\_time} $>\tau$}
			\State \texttt{STOP}$\gets$ \texttt{TRUE}
		\Else
			\State Instantiate RMP$^{k+1}$ with cuts \eqref{eq:single_cut:v1} in RMP$^{k}$
			\State Add to RMP$^{k+1}$ all saved cuts
		\EndIf
	\EndIf
\EndWhile
\end{algorithmic}
\end{algorithm}

The following proposition clarifies that the algorithm terminates in at most a finite number of iterations, even when $\epsilon=0$ and $\tau=\infty$, where $\epsilon$ is the target optimality gap and $\tau$ is the allowed computation time.

\begin{proposition}\label{prop:convergence}
Assume all optimality cuts \eqref{eq:single_cut:v1} have been added to RMP at iteration $k$. Then, for solution $(x^k,y^k,\mu^k)$ we have that
$$\mu^k=Q_p(x^k,y^k)$$
for all $p\in\mtc{P}$.
\end{proposition}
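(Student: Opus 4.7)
The plan is to establish equality by proving both inequalities $\mu_p^k \le Q_p(x^k,y^k)$ and $\mu_p^k \ge Q_p(x^k,y^k)$ separately for each $p\in\mathcal{P}$. The upper direction will follow from the tightness property of a specific cut already present in RMP$^{k}$, while the lower direction will follow from a feasibility-plus-optimality argument that leans on \Cref{prop:valid_cut}.

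\noindent\textbf{Upper bound via an already-present tight cut.} The assumption is that every inequality of type \eqref{eq:single_cut:v1} has been added to the relaxation, and as observed at the end of \Cref{subsec:opt_cut} these are only finitely many (at most $\sum_{p\in\mathcal{P}}\sum_{d\in\mathcal{D}_p}2^{|\mathcal{S}_{pd}|}$), one per product/distribution/partition triple. In particular, for each $p\in\mathcal{P}$ there is some seed solution $(\bar{x},\bar{y},\bar{\mu})$ used to generate a cut with $\bar{y}=y^k$ and with $\mathcal{S}^{\uparrow}_{pd(\bar{y})}(\bar{x},\bar{y})=\mathcal{S}^{\uparrow}_{pd(y^k)}(x^k,y^k)$ (and therefore equal $\downarrow$-sets). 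Substituting $(x^k,y^k,\mu^k)$ into this particular cut, the term $M_p(|\mathcal{F}|-\sum_f y^k_{p,f,l(p,f,d(\bar y))})$ vanishes since $d(\bar y)=d(y^k)$, and by exactly the algebra used in the proof of \Cref{cor:oc:otherx} the remaining right-hand side collapses to $Q_p(x^k,y^k)$. Hence $\mu_p^k\le Q_p(x^k,y^k)$.

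\noindent\textbf{Lower bound via RMP optimality.} Define an auxiliary point $(x^k,y^k,\tilde\mu)$ by setting $\tilde\mu_p:=Q_p(x^k,y^k)$ for every $p\in\mathcal{P}$. All constraints of RMP$^{k}$ that do not involve $\mu$ are trivially satisfied since $(x^k,y^k)$ is the same as in $(x^k,y^k,\mu^k)$. The bound constraints \eqref{eq:bnd} hold because $Q_p(x^k,y^k)\le M_p$ is immediate from the derivation of $M_p$. Every optimality cut \eqref{eq:single_cut:v1} currently in the relaxation is satisfied by $(x^k,y^k,\tilde\mu)$ directly by \Cref{prop:valid_cut}. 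Hence $(x^k,y^k,\tilde\mu)$ is feasible for RMP$^{k}$, so by optimality of $(x^k,y^k,\mu^k)$,
\begin{equation*}
-\sum_{p,f} C_{pf}x^k_{pf}+\sum_{p\in\mathcal{P}} Q_p(x^k,y^k)\;\le\;-\sum_{p,f}C_{pf}x^k_{pf}+\sum_{p\in\mathcal{P}}\mu_p^k,
\end{equation*}
that is $\sum_p Q_p(x^k,y^k)\le\sum_p\mu_p^k$. Combined with the per-product upper bound established above, each individual inequality must hold with equality, giving $\mu_p^k=Q_p(x^k,y^k)$ for every $p\in\mathcal{P}$.

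\noindent\textbf{Anticipated obstacle.} The only delicate point is the upper-bound step: the assertion ``all optimality cuts have been added'' must be read as covering every $(y,\text{partition})$ combination, and one has to identify, for each $p$, a cut whose $y$-component equals $y^k$ \emph{and} whose induced partition of $\mathcal{S}_{p d(y^k)}$ equals the one produced by $x^k$. Once this matching is pinned down, invoking \Cref{cor:oc:otherx} to see the cut collapse to $\mu_p\le Q_p(x^k,y^k)$ is mechanical; the rest of the argument is routine.
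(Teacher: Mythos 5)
Your proof is correct, and its first half is essentially the paper's argument in direct rather than contrapositive form: the paper assumes $\mu^k_p > Q_p(x^k,y^k)$ and notes (via Proposition \ref{prop:oc:cut}) that the solution would then violate the cut \eqref{eq:single_cut:v1} built from its own distribution $d(y^k)$ and its own partition of $\mathcal{S}_{pd(y^k)}$ -- a cut that is present by hypothesis -- yielding a contradiction; you instead pick out that same cut (indexed, as you correctly observe, by the triple product/distribution/partition rather than by any particular seed solution, so "all cuts added" guarantees it is there) and evaluate it at $(x^k,y^k,\mu^k)$ to conclude $\mu^k_p \leq Q_p(x^k,y^k)$ directly, using the collapse argument of Corollary \ref{cor:oc:otherx}. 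Where you go beyond the paper is the reverse inequality: the paper's proof stops after ruling out overestimation, so strictly speaking it only establishes $\mu^k_p \leq Q_p(x^k,y^k)$ even though the proposition asserts equality. Your second step -- showing that $(x^k,y^k,\tilde\mu)$ with $\tilde\mu_p = Q_p(x^k,y^k)$ is feasible for RMP$^k$ (bounds \eqref{eq:bnd} and all cuts, the latter by Proposition \ref{prop:valid_cut}) and then invoking optimality of $(x^k,y^k,\mu^k)$ to get $\sum_p Q_p(x^k,y^k) \leq \sum_p \mu^k_p$, which together with the per-product upper bounds forces equality componentwise -- supplies exactly the missing half, at the mild extra cost of using that $(x^k,y^k,\mu^k)$ is an \emph{optimal} (not merely feasible) RMP solution, which is how the algorithm produces it anyway. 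So your write-up is a complete and slightly stronger rendering of the paper's proof rather than a different method.
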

\proof{Proof of Proposition \ref{prop:convergence}}
This can be proven by contradiction. Assume all $\sum_{p\in\mtc{P}}\sum_{d\in\mtc{D}_p}2^{|\mtc{S}_{pd}|}$ optimality cuts \eqref{eq:single_cut:v1} have been added to RMP and that for $(x^k,y^k,\mu^k)$ we have that
$\mu^k > Q_p(x^k,y^k)$ 
for some $p\in\mtc{P}$. Then solution $(x^k,y^k,\mu^k)$ is cut off by 
\begin{align*}
\mu_p \leq & \sum_{s \in \mathcal{S}^{\downarrow}_{pd(y^k)}(x^k,y^k)} \pi_{sd(y^k)} \bigg(P_p \sum_{f \in \mathcal{F}}Y_{pfd(y^k)s}x_{pf} \bigg) \\ \nonumber 
& +\sum_{s \in \mathcal{S}^{\uparrow}_{pd(y^k)}(x^k,y^k)} \pi_{sd(y^k)}\bigg(O_p\sum_{f\in\mathcal{F}}Y_{pfd(y^k)s}x_{pf}+(P_p-O_p)D_{pd(y^k)s}\bigg)\\
    \nonumber  &+ M_{p}\bigg(\vert \mathcal{F} \vert - \sum_{f \in \mathcal{F}} y_{p,f,l(p,f,d(y^k))}\bigg) 
\end{align*}
as shown in Proposition \ref{prop:oc:cut}. This contradicts the assumption that all cuts are in RMP and completes the proof. 
\endproof

\subsection{Valid inequalities}
\label{subsec:pos_overage_bound}
We next present two further valid inequalities that can be added to RMP in an \textit{a priori} manner so that it better approximates the potential expected revenues earned from each product and is a tighter relaxation. 

For the first, we let $\bar{Y}_{pf}=\max_{d\in\mathcal{D}_p,s\in\mathcal{S}_{pd}}\{Y_{pfds}\}$
denote the largest yield for product $p$ at facility $f$ over all distributions. 
Given a solution  $(x^*,y^*)$ to MP, we have that $Q_p(x^*,y^*)\leq P_p\sum_{f\in\mathcal{F}}\bar{Y}_{pf}x_{pf}^{*}$ as the expected revenues from a given product
cannot exceed those realized when the highest yield is achieved at each facility and all finished goods inventory is sold at full price. Therefore, the following inequality is valid and can be added to RMP.
\begin{equation}\label{eq:vi1}
    \mu_p\leq P_p\sum_{f\in\mathcal{F}}\bar{Y}_{pf}x_{pf}\tag{VI1}
\end{equation}

The next valid inequality relies on the quantity $Y^E_{pfd} =  \sum_{s\in\mathcal{S}_{pd}}\pi_{sd}Y_{pfds}$ that represents the expected yield of product $p$ when allocated to facility $f$ under distribution $d.$ Given that definition, we next prove that the following inequality is valid.
\begin{equation}\label{eq:vi2}
\mu_p \leq P_p\sum_{f\in\mathcal{F}}\max_{d\in\mathcal{D}_{p}} \big\{Y^E_{pfd}\big\}x_{pf} \tag{VI2}   
\end{equation}

\noindent
\begin{proposition}\label{prop:VI2}
The inequality $$Q_p(x,y)\leq P_p\sum_{f\in\mathcal{F}}\Bigg[\max_{d\in\mathcal{D}_p}\{Y^E_{pfd}\big\}x_{pf}\Bigg]$$ is satisfied by all feasible solutions $(x,y)$ to MP.
\end{proposition}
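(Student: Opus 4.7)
The plan is to bound $Q_p(x,y)$ by the revenue that would be obtained if all inventory were sold at full price, then invoke linearity of expectation to rewrite the bound in terms of expected yields under the specific distribution enforced by $y$, and finally dominate that quantity by the per-facility maximum over all distributions.

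First, I would fix a feasible $(x,y)$ for MP and let $d(y)\in\mathcal{D}_p$ denote the distribution induced by $y$. Using the closed form for $Q_{pd(y)s}(z)$ given right before the subproblem discussion, I would verify the pointwise inequality $Q_{pd(y)s}(z)\leq P_p z_{pd(y)s}$. Both branches of the piecewise definition collapse to this bound: when $D_{pd(y)s}\geq z_{pd(y)s}$ the equality $Q_{pd(y)s}(z)=P_p z_{pd(y)s}$ holds, while when $D_{pd(y)s}<z_{pd(y)s}$ the expression equals $P_p z_{pd(y)s}+(O_p-P_p)(z_{pd(y)s}-D_{pd(y)s})\leq P_p z_{pd(y)s}$ since $O_p<C_{pf}<P_p$ and the second factor is nonnegative.

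Next, I would multiply by $\pi_{sd(y)}$ and sum over $s\in\mathcal{S}_{pd(y)}$ to obtain $Q_p(x,y)\leq P_p\sum_{s}\pi_{sd(y)}z_{pd(y)s}$. Substituting $z_{pd(y)s}=\sum_{f\in\mathcal{F}}Y_{pfd(y)s}x_{pf}$ from \eqref{eq:model:c5} and exchanging the order of summation gives
\begin{equation*}
Q_p(x,y)\leq P_p\sum_{f\in\mathcal{F}}x_{pf}\sum_{s\in\mathcal{S}_{pd(y)}}\pi_{sd(y)}Y_{pfd(y)s}=P_p\sum_{f\in\mathcal{F}}Y^E_{pfd(y)}x_{pf},
\end{equation*}
using the definition of $Y^E_{pfd}$. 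Finally, since $x_{pf}\geq 0$ and $Y^E_{pfd(y)}\leq\max_{d\in\mathcal{D}_p}\{Y^E_{pfd}\}$ termwise, the desired inequality follows immediately.

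The argument is conceptually straightforward; the only subtlety is the first step, where one must be careful that the penalty for over-production (selling leftover inventory at $O_p$ instead of $P_p$) always reduces, rather than increases, revenue relative to the $P_p z$ upper bound. This is where the model's assumption $O_p<C_{pf}<P_p$ (implicit in the problem description) is essential; once this is noted, the rest is bookkeeping via linearity of expectation and a termwise maximum.
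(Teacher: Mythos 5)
Your proof is correct and follows essentially the same route as the paper's: bound per-scenario revenue by full-price sales of all inventory, take expectations to obtain $P_p\sum_{f}Y^E_{pfd}x_{pf}$, and then pass to $\max_{d\in\mathcal{D}_p}\{Y^E_{pfd}\}$ using $x_{pf}\geq 0$. The only cosmetic difference is that you work directly with the enforced distribution $d(y)$ and bound termwise, while the paper bounds $Q_{pd}$ uniformly over all $d$ and then takes the maximum over distributions; the key fact needed in either case is just $P_p>O_p$, which the paper uses throughout.
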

\noindent
\begin{proof}{Proof of Proposition \ref{prop:VI2}}
Consider a solution $(\tilde{x},\tilde{y})$ to MP. As before, we let $Q_{pds}(\tilde{x},\tilde{y})$ denote the expected revenues earned for product $p$ under distribution $d \in \mathcal{D}_p$ and scenario $s\in\mathcal{S}_{pd}$ given the allocations prescribed by $\tilde{x}.$ We have that $Q_{pds}(\tilde{x},\tilde{y}) \leq P_p\sum_{f\in\mathcal{F}}Y_{pfds}\tilde{x}_{pf}$ as the most revenue that can be earned is when all finished goods are sold at full price.

Taking expectations with respect to the scenarios in $\mathcal{S}_{pd}$ we have
\begin{align*}Q_{pd}(\tilde{x},\tilde{y})\leq & P_p\sum_{f\in\mathcal{F}}\sum_{s\in\mathcal{S}_{pd}}\big(\pi_{sd}Y_{pfds}\tilde{x}_{pf}\big)\\
                             &= P_p\sum_{f\in\mathcal{F}}Y^E_{pfd}\tilde{x}_{pf}
\end{align*}
where $Q_{pd}(x,y)$ represents the expected revenue for product $p$ under distribution $d$.
Furthermore, we have that
\begin{align*}\max_{d\in\mathcal{D}_{p}}Q_{pd}(\tilde{x},\tilde{y})\leq & \max_{d\in\mathcal{D}_{p}} \big(P_p\sum_{f\in\mathcal{F}}Y^E_{pfd}\tilde{x}_{pf}\big)\\
                                    & = P_p\sum_{f\in\mathcal{F}}\max_{d\in\mathcal{D}_{p}} \big(Y^E_{pfd}\big)\tilde{x}_{pf}.
\end{align*}
Thus, we can conclude that  $$Q_{p}(\tilde{x},\tilde{y}) \leq \max_{d\in\mathcal{D}_{p}}Q_{pd}(\tilde{x},\tilde{y})\leq P_p\sum_{f\in\mathcal{F}}\max_{d\in\mathcal{D}_{p}} \{Y^E_{pfd}\}\tilde{x}_{pf}$$
as required .
\end{proof}

\section{Computational analysis}
\label{sec:comp_analysis}
\noindent
In this section we computationally analyze two issues. First, we estimate the value in modeling uncertainty in both production yields and product demands by computing the well-known \textit{Value of Stochastic Solution} (VSS), and under different sources of uncertainty. Specifically, we compute the VSS when there is uncertainty only in production yields, only in product demands, and uncertainty in both. We next study the effectiveness of the proposed Benders-based solution method. However, to ground those analyses we first discuss the instances and computational environment used in the computational study. 

\subsection{Instances}
\label{subsec:instances}
\noindent
In this section we  discuss the instances used in the experiments that form the basis of our compuational analyses.  We first note that all instances underlying our computational study are randomly generated. Thus, we will next describe the parameter values used when generating those instances. We recall that there are two primary parameters that define the context in which the production planning problem considered in this paper would be formulated and solved. Specifically, the number of products, $\vert \mathcal{P} \vert$, to be manufactured and the number of facilities, $\vert \mathcal{F} \vert$, in which they can be manufactured. A third parameter defining an instance is the number of production levels, $\vert \mathcal{L}_{pf} \vert$, for product $p$ at facility $f$ for which there is a different statistical distribution describing the production yield. All instances considered in our experiments contained a number of products that ranged between five and ten and a number of facilities that ranged between two and five. We note the instances were constructed such that each product could be produced in each facility. In addition, all products had either two or three production levels at each facility.  Thus, we define a class of instance by the triplet $(\vert \mathcal{F} \vert, \vert \mathcal{P} \vert, \vert \mathcal{L} \vert)$, wherein $\vert \mathcal{L}_{pf} \vert = \vert \mathcal{L} \vert \;\; \forall f \in \mathcal{F}, p \in \mathcal{P}.$

For an instance within a given class we randomly determined the remaining parameter values. We determined the value of the capacity, $B_{f}$, of facility $f$ via the following approach. We first determined total expected demand of each product over all scenarios for that instance. More specifically, let $\nu_{p}$ denote the expected demand of product $p.$ We then computed the value $\chi = \sum_{p \in \mathcal{P}} \nu_{p} / \vert \mathcal{F} \vert,$ which represents the expected capacity needed at each facility if demands were met and allocated evenly across facilities. Next, for each facility $f$ we drew its capacity from the uniform distribution $[\chi - \gamma\chi, \chi + \gamma\chi]$ with $\gamma = .1$

Regarding costs and revenues, the cost $C_{fp}$ of manufacturing product $p \in \mathcal{P}$ in facility $f \in \mathcal{F}$ was randomly drawn from the range $[60,80].$ The price $P_{p}$ at which product $p \in \mathcal{P}$ is sold was randomly drawn from the range $[125,185]$. As such, the expected gross margin on products sold is 54\%.  Finally, the revenue $O_{p}$ recovered from left-over inventory of product $p \in \mathcal{P}$ was randomly drawn from the range $[15,40].$ Note this ensures that in each instance there is a loss on products manufactured but not sold at full price.

As noted, we generated instances with either two or three production levels for each product at each facility. Given a number of production levels, the same procedure is used to generate the bounds for each level for each product and each facility. When there are two levels, the bounds for the levels are $(L_{pf0},U_{pf0}) = (0,.75\nu_{p})$ and $(L_{pf1},U_{pf1}) = (.75\nu_{p},\nu_{p}).$  When there are three levels, the bounds are $(L_{pf0},U_{pf0}) = (0,.5\nu_{p}), (L_{pf1},U_{pf1}) = (.50\nu_{p},.75\nu_{p}), $ and $(L_{pf2},U_{pf2}) = (.75\nu_{p},\nu_{p})$.

We next discuss the scenario generation process. We first recall that the model and the solution method presented allow the treatment of endogenous demand distributions. However, we are unaware of sufficient justification to model this relationship. Thus, in the scenarios generated demand is treated as a fully exogenous random variable. This is done by appending the same marginal demand distribution to the joint distribution of the yield.

Thus, associated with each product in an instance are $\vert \mathcal{D}_{p} \vert =  \vert  \mathcal{L} \vert^{\vert \mathcal{F} \vert} + 1$ distributions, wherein the first $ \vert \mathcal{L} \vert^{\vert \mathcal{F} \vert} $ are distributions of production yields at different facilities for different sets of production levels and the remaining is of product demands. The scenario generation process used to generate instances presumed a given number of scenarios to represent each distribution. Specifically, $S =$ 5,10,15,20, and 25 scenarios per distribution were considered. As a result, an instance consists of $S\vert \mathcal{P} \vert (  \vert \mathcal{L} \vert^{\vert \mathcal{F} \vert} + 1)$ scenarios, which can range from 105 to 60,750 scenarios. Regarding demands, we presumed the distribution of demand for each product was normal with mean 20,000 and standard deviation of 15,000. 

For production yields, we considered normal distributions albeit with different parameters. We note that for a given product and facility we presume the mean of the yield distribution is the same for all production levels. However, for a given mean, the larger the production level the smaller the standard deviation. In other words, we model that variability in yields decreases as the level of production increases. We summarize the different yield distributions we considered in Table \ref{tbl:yield_dists}. When generating an instance with a given number of production levels, one of the relevant distributions in Table \ref{tbl:yield_dists} was randomly assigned to each product and facility. Then, to generate scenarios, values were randomly drawn from these distributions. However, we note that we truncated the values at $.25$ and $1.$

\begin{table}[htp]
\small
\center
\begin{tabular}{|c|c|c|c|}
\hline
\multicolumn{4}{|c|}{Distributions for $\vert \mathcal{L} \vert = 2$} \\
\hline
Distribution & Level & Mean & Standard deviation \\
\hline
\multirow{2}{*}{1} & 1 & 0.50 & 0.20 \\
 & 2 & 0.50 & 0.10 \\
\multirow{2}{*}{2} & 1 & 0.70 & 0.20 \\
 & 2 & 0.70 & 0.10 \\
\multirow{2}{*}{3} & 1 & 0.90 & 0.05 \\
 & 2 & 0.90 & 0.01 \\
\hline
\multicolumn{4}{|c|}{Distributions for $\vert \mathcal{L} \vert = 3$} \\
\hline
Distribution & Level & Mean & Standard deviation \\
\hline
\multirow{3}{*}{1} & 1 & 0.50 & 0.20 \\
 & 2 & 0.50 & 0.15 \\
 & 3 & 0.50 & 0.10 \\
\multirow{3}{*}{2} & 1 & 0.70 & 0.20 \\
 & 2 & 0.70 & 0.15 \\
 & 3 & 0.70 & 0.10 \\
\multirow{3}{*}{3} & 1 & 0.90 & 0.05 \\
 & 2 & 0.90 & 0.03 \\
 & 3 & 0.90 & 0.01 \\
\hline
\end{tabular}
\caption{Production yield distributions, two and three production levels}
\label{tbl:yield_dists}
\end{table}

For a given class and given number of scenarios per distribution we generated five instances. In total, we considered four different values for $\vert \mathcal{F} \vert$, six different values for $\vert \mathcal{P} \vert$ and two different values of $\vert \mathcal{L} \vert$, yielding 48 different classes of instances. Then, for each class we considered five different numbers of scenarios per distribution, yielding 240 instance configurations. With five instances per configuration, our experimental study is based upon 1,200 instances in total.

\subsection{Computational environment}
\label{subsec:comp_environment}

In this section, we describe the computational environment in which all experiments were run. All code is implemented in Java. All optimization models solved by that code were done so using CPLEX 20.1. We note that the method proposed in \Cref{sec:benders} has been embedded in a branch-and-cut framework in which optimality cuts are separated and added to the RMP at nodes at which integer solutions are discovered.

All tests were performed on computing nodes equipped with 40 CPUs and 188 GB memory. However, no parallelization techniques were used when executing the proposed Benders-based algorithm. Each of those experiments was run using a single thread. However, when solving extensive linearized forms with CPLEX, CPLEX was configured to use its default deterministic parallel search strategy for the Branch \& Bound method, which used up to $32$ threads. Both executing the proposed method and solving the deterministic equivalent were done with an optimality tolerance of $0.0001$ and a time limit of $1,800$ seconds.

\subsection{Value of modeling uncertainty}
\label{subsec:value_uncertain}
We next study the value in modeling uncertainty in both supply and demand in this context. To do so, we compute the \textit{Value of Stochastic Solution} ($VSS$) (see \cite{Bir82} and its extensions \cite{EscGMP07,MagW12,PanB20}) adapted to problems with endogenous uncertainty. Generally speaking, the $VSS$ represents the relative difference in objective function values between the optimal solution to the stochastic program and a solution derived by optimizing with respect to expected values. Particularly, when solving the expected value problem, random parameters are replaced by their expected values in each possible distribution and the choice of production levels no longer determines a probability distribution but simply expected yield and demand realizations. The solution to the expected value problem (which determines a probability distribution of yield and demand) is then used to formulate and solve instances of the second-stage problem for each scenario of the distribution enforced. Observe that the distribution enforced may be different from that enforced by the optimal solution to the stochastic program. The solutions to the second-stage problems, along with the first stage values from the expected value problem, are then used to construct a complete solution to the stochastic program. With $v_{SP}$ representing the objective function value of an optimal solution to the stochastic program and $v_{EV}$ the objective function value of the expected value solution, the $VSS$ can be computed as $(v_{SP} - v_{EV})/v_{SP}.$

In our problem, there is uncertainty with respect to both supply and demand. As a result, we compute multiple $VSS$s. The first, which we label $VSS_{supply}$, measures the value in modeling uncertainty in supply, presuming demand is uncertain. To do so, the expected value solution is constructed by solving the stochastic program wherein yield values are not treated as uncertain and instead set to their expected values. Like the traditional $VSS,$ the resulting values of first stage decision variables are used to formulate and solve second stage problems for each scenario. Letting $v_{EV}^{supply}$ represent the objective function value of the resulting solution we have $VSS_{supply} = (v_{SP} - v_{EV}^{supply})/v_{SP}.$ The second, which we label $VSS_{demand}$, measures the value in modeling uncertainty in demand, presuming yields are uncertain. In this case, the expected value solution is constructed by solving the stochastic program wherein demand values are set to their expected values. Letting $v_{EV}^{demand}$ represent the objective function value of the resulting solution we have $VSS_{demand} = (v_{SP} - v_{EV}^{demand})/v_{SP}.$ Finally, we denote the classical $VSS$, which is computed by constructing an expected value solution based on expectations with respect to both demands and production yields, by $VSS_{full}.$ 

Thus, in the next three tables we report the $VSS_{supply}, VSS_{demand}, $ and $VSS_{full}$ statistics, averaged over different sets of instances. We note we only report results for instances which could be solved to optimality. In Table \ref{tbl:vss_products} we report each of the $VSS$ statistics, albeit averaged over instances with the same number of products. Table \ref{tbl:vss_facilities} is similar, albeit averaged over instances with the same number of facilities. Finally, Table \ref{tbl:vss_levels} reports $VSS$ statistics averaged over instances with the same number of production levels. 

\begin{table}[htp]
\center
\small
\begin{tabular}{|c|c|c|c|c|c|c|c|}
\hline
& \multicolumn{6}{c}{$\vert \mathcal{P} \vert$} & \\
$VSS$ & 5 & 6 & 7 & 8 & 9 & 10 & Average \\
\hline
$VSS_{supply}$ & 1.14\% & 0.89\% & 1.03\% & 1.02\% & 0.90\% & 0.92\% & 0.99\% \\
$VSS_{demand}$ & 7.83\% & 5.13\% & 5.16\% & 5.34\% & 3.97\% & 4.09\% & 5.37\% \\
$VSS_{full}$ & 22.03\% & 17.02\% & 18.53\% & 17.92\% & 13.48\% & 15.43\% & 17.64\% \\
\hline
\end{tabular}
\caption{$VSS$ statistics by number of products}
\label{tbl:vss_products}
\end{table}

\begin{table}[htp]
\begin{subtable}[c]{.5\textwidth}
\small
\centering
\begin{tabular}{|c|c|c|c|c|}
\hline
& \multicolumn{4}{c|}{$\vert \mathcal{F} \vert$}  \\
$VSS$ & 2 & 3 & 4 & 5 \\
\hline
$VSS_{supply}$ & 0.86\% & 1.01\% & 1.07\% & 1.04\% \\
$VSS_{demand}$ & 4.83\% & 4.92\% & 6.37\% & 5.59\% \\
$VSS_{full}$ & 15.49\% & 17.22\% & 19.74\% & 18.76\% \\
\hline
\end{tabular}
\subcaption{By number of facilities}
\label{tbl:vss_facilities}
\end{subtable}
\begin{subtable}[c]{.5\textwidth}
\center
\small
\begin{tabular}{|c|c|c|}
\hline
& \multicolumn{2}{c|}{$\vert \mathcal{L} \vert$}  \\
 & 2 & 3 \\
 \hline
$VSS_{supply}$ & 1.19\% & 0.71\% \\
$VSS_{demand}$ & 5.73\% & 4.89\% \\
$VSS_{full}$ & 19.44\% & 15.24\% \\
\hline
\end{tabular}
\subcaption{By number of production levels}
\label{tbl:vss_levels}
\end{subtable}
\caption{$VSS$ statistics}
\end{table}
We see from these tables that taken in isolation, modeling uncertainty in demands leads to a greater difference in solution quality than modeling uncertainty in production yields. However, we also see that modeling uncertainty in both leads to the greatest improvement in solution quality. Finally, we do not conclude from these tables that there is a correlation between the parameter values that define a class of instance and any of the $VSS$ statistics.

\subsection{Performance of proposed solution approach}
\label{subsec:perf_benders}
\noindent
In this section, we study the performance of the Benders-based solution approach presented in Section \ref{sec:benders}. We note that to generate the results presented in this section, computational experiments were run with a time limit of 1,800 seconds and an optimality tolerance of .0001 (the default for CPLEX). We focus our comparisons on two performance metrics. The first is the percentage of the 1,200 instances that a method could solve within the time limit and to the desired optimality tolerance. The second is the average time in seconds the method took to solve an instance to that tolerance, averaged over instances the method was able to solve.

We first compare the computational performance of the proposed Benders-based approach with that of two benchmarks. The first is a commercial solver, CPLEX, solving the full instance of the PP-DESUP, after linearization. We label this benchmark CPLEX-FULL. Given that the second stage variables in PP-DESUP are continuous, the problem is a candidate for a classical Benders decomposition-based approach. To benchmark against such an approach, we use the automated Benders decomposition of CPLEX, which we label as CPLEX-BD. We first consider the performance of the approach presented in Section \ref{subsec:alg_convergence}, which we label BSC as it involves a single optimality cut for each product at each iteration. We then study the impact of augmenting BBM with the valid inequalities VI1 and VI2. We label such algorithm configurations as BBM + VI1 and BBM+VI2 respectively. 

We report in Table \ref{tbl:comp_bench} the values of the two performance metrics for the proposed Benders-based method and the two benchmarks. We see that the Benders-based approach was able to solve many more instances than either of the two benchmarks. In addition, even though it could solve many more instances, it was also able to solve that larger set of instances in much less time. Thus, we conclude from the results in this table that the Benders-based approach is computationally superior to the two benchmarks under consideration.

\begin{table}[htp]
\center
\begin{tabular}{|c|c|c|}
\hline
Method & \% solved & Time to solve (sec.) \\
\hline
CPLEX-Full & 23.98\% & 855.28 \\
CPLEX-BD & 8.82\% & 1151.82 \\
BBM & 71.76\% & 151.27 \\
\hline
\end{tabular}
\caption{Comparison of Benders-based algorithm with benchmarks}
\label{tbl:comp_bench}
\end{table}

We next study the impact of augmenting the Benders-based method with the two valid inequalities. We focus on the same two performance metrics as reported in Table \ref{tbl:comp_bench}, but averaged over instances based on the same number of facilities (Tables \ref{tbl:bd_pct_solved_byf} and \ref{tbl:bd_solve_time_byf}), products (Tables \ref{tbl:bd_pct_solved_byp} and \ref{tbl:bd_solve_time_byp}), or production levels (Tables \ref{tbl:bd_pct_solved_byl} and \ref{tbl:bd_solve_time_byl}). To further understand the performance of the two benchmarks, we also report values of the performance metrics for those methods.

\begin{table}[htp]
\begin{subtable}[c]{.5\textwidth}
\tiny
\centering
\begin{tabular}{|c|c|c|c|c|c|}
\hline
& \multicolumn{4}{c|}{$\vert \mathcal{F} \vert$} &  \\
 Method & 2 & 3 & 4 & 5 & Average\\
 \hline
CPLEX-Full & 51.01\% & 17.03\% & 10.82\% & 7.18\% & 23.98\% \\
CPLEX-BD & 23.34\% & 8.90\% & 2.18\% & 0.00\% & 8.82\% \\
BBM & 94.93\% & 76.11\% & 64.51\% & 51.52\% & 71.76\% \\
BBM+VI1 & 95.97\% & 76.95\% & 65.89\% & 55.18\% & 73.47\% \\
BBM+VI2 & 100.00\% & 97.27\% & 82.21\% & 69.67\% & 87.24\% \\
\hline
\end{tabular}
\subcaption{\% solved}
\label{tbl:bd_pct_solved_byf}
\end{subtable}
\begin{subtable}[c]{.5\textwidth}
\center
\tiny
\begin{tabular}{|c|c|c|c|c|c|}
\hline
& \multicolumn{4}{c|}{$\vert \mathcal{F} \vert$}  & \\
 Method & 2 & 3 & 4 & 5 & Average\\
 \hline
CPLEX-Full &  762.01  &  946.56  &  1,178.21  &  987.67  &  855.28  \\
CPLEX-BD &  1,161.05  &  1,111.79  &  1,222.12  & N/A &  1,151.82  \\
BBM &  89.76  &  102.82  &  198.27  &  276.81  &  151.27  \\
BBM+VI1 &  94.11  &  122.70  &  197.27  &  291.81  &  162.04  \\
BBM+VI2 &  27.07  &  106.48  &  147.56  &  123.41  &  96.65  \\
\hline
\end{tabular}
\subcaption{Solve time}
\label{tbl:bd_solve_time_byf}
\end{subtable}
\caption{Performance metrics by number of facilities}
\end{table}

\begin{table}[htp]
\begin{subtable}[c]{.95\textwidth}
\center
\tiny
\begin{tabular}{|c|c|c|c|c|c|c|c|}
\hline
& \multicolumn{6}{c|}{$\vert \mathcal{P} \vert$} &  \\
Method & 5 & 6 & 7 & 8 & 9 & 10 & Average \\
\hline
CPLEX-Full & 51.72\% & 39.88\% & 25.47\% & 13.46\% & 6.10\% & 4.35\% & 23.98\% \\
CPLEX-BD & 28.93\% & 14.52\% & 5.32\% & 2.79\% & 0.00\% & 0.00\% & 8.82\% \\
BBM & 100.00\% & 92.35\% & 77.44\% & 64.97\% & 60.10\% & 35.71\% & 71.76\% \\
BBM+VI1 & 100.00\% & 95.98\% & 80.40\% & 65.66\% & 59.50\% & 39.70\% & 73.47\% \\
BBM+VI2 & 100.00\% & 100.00\% & 91.37\% & 81.00\% & 77.89\% & 73.50\% & 87.24\% \\
\hline
\end{tabular}
\subcaption{\% solved}
\label{tbl:bd_pct_solved_byp}
\end{subtable}
\begin{subtable}[c]{.95\textwidth}
\center
\tiny
\begin{tabular}{|c|c|c|c|c|c|c|c|}
\hline
& \multicolumn{6}{c|}{$\vert \mathcal{P} \vert$}  & \\
 Method & 5 & 6 & 7 & 8 & 9 & 10 & Average \\
 \hline
CPLEX-Full &  779.02  &  851.63  &  892.30  &  938.38  &  992.14  &  1,209.10  &  855.28  \\
CPLEX-BD &  1,144.34  &  1,166.73  &  957.77  &  1,544.62  & N/A& N/A &  1,151.82  \\
BBM &  51.62  &  112.49  &  130.79  &  105.21  &  304.62  &  399.72  &  151.27  \\
BBM+VI1 &  44.64  &  143.89  &  178.39  &  129.29  &  258.71  &  372.32  &  162.04  \\
BBM+VI2 &  25.07  &  65.25  &  132.60  &  111.55  &  132.47  &  136.61  &  96.65  \\
\hline
\end{tabular}
\subcaption{Solve time}
\label{tbl:bd_solve_time_byp}
\end{subtable}
\caption{Performance metrics of methods by number of products}
\end{table}

\begin{table}[htp]
\begin{subtable}[c]{.5\textwidth}
\tiny
\centering
\begin{tabular}{|c|c|c|c|}
\hline
& \multicolumn{2}{c|}{$\vert \mathcal{L}_{pf} \vert$} &  \\
 Method & 2 & 3  & Average\\
 \hline
CPLEX-Full & 32.98\% & 12.06\% & 23.98\% \\
CPLEX-BD & 15.17\% & 2.03\% & 8.82\% \\
BBM & 92.98\% & 51.01\% & 71.82\% \\
BBM+VI1 & 93.11\% & 53.86\% & 73.47\% \\
BBM+VI2 & 100.00\% & 74.62\% & 87.24\% \\
\hline
\end{tabular}
\subcaption{\% solved}
\label{tbl:bd_pct_solved_byl}
\end{subtable}
\begin{subtable}[c]{.5\textwidth}
\center
\tiny
\begin{tabular}{|c|c|c|c|}
\hline
& \multicolumn{2}{c|}{$\vert \mathcal{L}_{pf} \vert$} &  \\
 Method & 2 & 3  & Average\\
 \hline
 CPLEX-Full &  850.64  &  872.13  &  855.28  \\
CPLEX-BD &  1,135.83  &  1,279.73  &  1,151.82  \\
BBM & 91.79  &  257.87  &  151.27  \\
 BBM+VI1 & 85.58  &  293.99  &  162.04  \\
 BBM+VI2 &11.33  &  209.63  &  96.65  \\
 \hline
 Average & 61.55  &  248.56  &  134.12  \\
\hline
\end{tabular}
\subcaption{Solve time}
\label{tbl:bd_solve_time_byl}
\end{subtable}
\caption{Performance metrics of Benders-based method by number of production levels}
\end{table}

We first note that CPLEX-BD performs worse (i.e. solves fewer instances, requires more time) than CPLEX-Full for every value of $\vert \mathcal{P} \vert, \vert \mathcal{F} \vert, $ and $\vert \mathcal{L}_{pf} \vert.$ We next observe that the first valid inequality (VI1) enables the Benders-based method to solve more instances. While the average solve time increases, this is due to BBM+VI1 solving instances that BBM could not solve. We note that BBM+VI1 was able to solve every instance that BBM could solve and it solved those instances in less time, 129.95 seconds.

While the first valid inequality improves the performance of the Benders-based method, the second valid inequality (VI2) does so much more. Further, for every value of $\vert \mathcal{P} \vert, \vert \mathcal{F} \vert, $ or $\vert \mathcal{L}_{pf} \vert$ we see that using VI2 enabled the Benders-based method to solve at least as many instances and in much less time. Next, we note that an increase in any of the parameter values lead to a decrease in performance (fewer instances solved, more time required to do so) for all methods under consideration. However, increasing the number of production levels, arguably leads to the greatest degradation in performance. This is likely due to a larger number of production levels leading to a much larger set of scenarios, $S,$ overall. 

We also note that on average, when BBM could not solve an instance, the average optimality gap it reported for those instances was 134.04\%. With the use of VI1, that average drops to 104.19\%. With the use of VI2 that gap drops further, to 30.48\%. Finally, we observe that BBM, with and without valid inequalities, never required more than a few megabytes of memory during its execution.

To understand why VI2 has such a dramatic impact on the performance of the Benders-based method, we recall that VI2 is added to the RMP \textit{a priori}. Therefore, we next study the relative gap between the upper bound on the optimal objective function value, $v_{MP}^{*},$ produced at the first iteration by solving the RMP both when VI2 is added to and when it is not. Namely, we let $v_{RMP}^{1}$ denote the bound produced by solving RMP at the first iteration of the Benders-based method, before any optimality cuts have been added. We let $v_{RMP}^{1-VI2}$ represent the same quantity, only when RMP is strengthened with VI2. Given these values, we compute the gap $RMP_{gap} = (v_{RMP}^{1}-v_{RMP}^{1-VI2})/v_{RMP}^{1}.$We report averages of these bounds and this gap, averaged over instances based on the same number of products in Table \ref{tbl:avg_rmp_gap_by_p}. Table \ref{tbl:avg_rmp_gap_by_f} is similar, only averaged over instances based on the same number of facilities. 

\begin{table}[htp]
\begin{subtable}[c]{.5\textwidth}
\small
\centering
\begin{tabular}{|c|c|c|c|}
\hline
$\vert \mathcal{P} \vert$ & $v_{RMP}^{1}$ & $v_{RMP}^{1-VI2}$ & $RMP_{gap}$ \\
\hline
5 &  10,061,828.38  &  1,933,102.16  & -420.50\% \\
6 &  13,901,803.48  &  2,862,069.48  & -385.73\% \\
7 &  17,792,070.22  &  3,718,940.70  & -378.42\% \\
8 &  19,232,664.61  &  3,934,961.43  & -388.76\% \\
9 &  26,841,465.65  &  6,018,151.87  & -346.01\% \\
10 &  28,724,420.99  &  6,316,784.74  & -354.73\% \\
\hline
\end{tabular}
\subcaption{By number of products}
\label{tbl:avg_rmp_gap_by_p}
\end{subtable}
\begin{subtable}[c]{.5\textwidth}
\center
\small
\begin{tabular}{|c|c|c|c|}
\hline
$\vert \mathcal{F} \vert$ & $v_{RMP}^{1}$ & $v_{RMP}^{1-VI2}$ & $RMP_{gap}$ \\
\hline
2 &  16,285,038.80  &  3,236,958.38  & -403.10\% \\
3 &  19,261,859.83  &  3,936,760.71  & -389.28\% \\
4 &  20,629,325.55  &  4,464,340.33  & -362.09\% \\
5 &  21,646,685.17  &  4,916,712.95  & -340.27\% \\
\hline
\end{tabular}
\subcaption{By number of facilities}
\label{tbl:avg_rmp_gap_by_f}
\end{subtable}
\caption{Improvement in bound due to VI2}
\end{table}

We see that adding VI2 to RMP leads to a significant reduction in the bound produced by solving RMP before the addition of any optimality cuts. However, this reduction tends to decrease as instance parameter values increase, particularly the number of facilities. We note that VI1 also improves the bound at the first iteration, only less so. Averaged over all instances, using inequality VI1 reduces the bound at the first iteration by 82.25\%. 
 
\section{Conclusion and future work}
\label{sec:conclusion_future}
\noindent
In this paper, we studied a production planning problem in which there is uncertainty in both supply and demand. Further complicating matters, uncertainty in supply is endogeneous as it depends on choices regarding production levels of products at facilities. We formulated the problem as a two-stage stochastic program in which scenarios contain both production yield and customer demand information. We proposed a Benders-based solution approach wherein the objective function of the relaxed master problem includes decision variables that approximate the expected profits for a product given production choices made in the first stage. In the proposed scheme, when the approximation variables over-estimate expected profits an optimality cut is generated and added to the relaxed master problem. The solution process then repeats. We proved there are a finite number of such optimality cuts and thus  that the algorithm is guaranteed to converge in a finite number of iterations. We also presented additional valid inequalities to strengthen the relaxed master problem. 

With an extensive computational study we first analyzed the value in modeling supply and demand uncertainty separately as well as together. We saw that modeling either supply or demand unertainty lead to an improvement in solution quality in a stochastic setting. However, when modeling both the improvement was much more pronounced. Finally, we demonstrated that the proposed Benders-based scheme outperforms two benchmarks. The first benchmark is an off-the-shelf solver solving the full stochastic program. As the second stage variables in the proposed stochastic program are continuous, the second benchmark is a standard implementation of Benders, as implemented by an off-the-shelf solver. 

We see multiple avenues for future work in this area. The first is managerial. In this paper we focused on proposing an efficient, exact, algorithm for the problem under consideration. However, such a problem arises in multiple industrial contexts, including agriculture and manufacturing. An extensive analysis of high-quality solutions to the problem could yield tactics for managing uncertainty in both supply and demand, particularly when supply uncertainty is endogenous, for those contexts. 

The second is algorithmic and is motivated by assumptions underlying the proposed work that may not hold in certain practical applications. For example, a critical assumption of the model and Benders-based scheme we propose is that the yields across products are independent. However, in many practical contexts this is not the case. Relatedly, the proposed Benders-based scheme relies on the fact that the determination of expected profits is separable by product. However, in cases wherein one product may be substituted for another, such separability is lost. Also, the inequality generated by the proposed Benders-based scheme is specific to the distribution induced by the master problem solution at hand. The validity of that inequality requires an expression that indicates whether a master problem solution induces that same distribution. There may be opportunities to strengthen the cut with a different expression that captures this implication.

The third relates to implementation of the proposed model in industrial contexts as it presupposes the fitting of multiple distributions that describe yields of a product at different facilities. On the one hand, we see opportunities for research into how to best determine and fit those distributions. On the other hand, we see opportunities to develop robust optimization-type models which require less information regarding distributions.

Finally, we recognize the need for additional research to estimate the value of modeling endogenous uncertainty. To provide such estimate tests and metrics would be required that compare the solution to a stochastic program with endogenous uncertainty to that of a benchmark stochastic program with exogenous uncertainty. Nevertheless, the decisions
of how to choose the exogenous distribution for the stochastic program with exogenous uncertainty lends itself to different interpretations and is potentially problem-specific.

\bibliographystyle{abbrv}
\bibliography{preprint.bib}

\begin{appendices}
\section{Linearized PP-DESUP} \label{app:linearized_pp_desup}

  In this appendix we provide an exact linearization of the PP-DESUP using McCormick inequalities. Particularly, we add the variable $\mu_{pds}$ to replace the product $\delta_{pd}w_{pds}$ and the variable $\rho_{pds}$ to replace the product $\delta_{pd}o_{pds}$. To define the constraints that enforce $\mu_{pds}=\delta_{pd}w_{pds}$ we note that $D_{pds}$ is an upper bound on the decision variable $\mu_{pds}$ as more of a product cannot be sold in a scenario than there is demand.
  To define the constraints that enforce $\rho_{pds}=\delta_{pd}o_{pds}$ we note that $o_{pds}\leq N_{pds}=\sum_{f\in\mathcal{F}}Y_{pfds}\min\{B_{f},\max_{l\in\mathcal{L}_{pf}}U_{pfl}\}$ on $o_{pds}.$ This upper bound reflects that the inventory in excess of demand for a product in a scenario can never exceed the inventory achieved when allocating the maximum amount to each facility and the corresponding yields occurring. 
The resulting linearized model is as follows.

\begin{subequations}
\small
\label{eq:fullmodel}
\begin{align}
\label{eq:fullmodel:obj}v_{PP-DESUP}^{*} = \max~&-\sum_{p\in\mathcal{P}}\sum_{f\in\mathcal{F}}C_{pf}x_{pf}+\sum_{p\in\mathcal{P}}\sum_{d\in\mathcal{D}_p}\sum_{s\in\mathcal{S}_{pd}}\pi_{sd}\bigg(P_p \mu_{pds}+O_{p}\rho_{pds}\bigg)\\
\label{eq:model:c1b} & \sum_{p\in\mathcal{P}}x_{pf} \leq B_f & \forall f\in\mathcal{F}\\
\label{eq:fullmodel:c2} &   \sum_{l\in\mathcal{L}_{pf}}y_{pfl} =1&\forall p\in\mathcal{P}, f\in\mathcal{F}  \\
\label{eq:fullmodel:c1} &   \sum_{l\in\mathcal{L}_{pf}}L_{pfl}y_{pfl} \leq x_{pf}\leq \sum_{l\in\mathcal{L}_{pf}}\min\{b_{f},U_{pfl}y_{pfl}\}&\forall p\in\mathcal{P}, f\in\mathcal{F}\\ 
\label{eq:fullmodel:c4}    & \sum_{d\in\mathcal{D}_p}\delta_{pd} = 1 & \forall p\in\mathcal{P} \\
\label{eq:fullmodel:c3} &\sum_{f\in\mathcal{F}}y_{p,f,l(p,f,d)}\geq |\mathcal{F}|\delta_{pd} &\forall p\in\mathcal{P}, d\in \mathcal{D}_p\\
\label{eq:fullmodel:c0} &z_{pds}= \sum_{f\in\mathcal{F}}Y_{pfds}x_{pf} & \forall p\in\mathcal{P}, d\in\mathcal{D}_p, s\in\mathcal{S}_{pd}, \\
\label{eq:fullmodel:c6}  &w_{pds}\leq D_{pds}&\forall p\in\mathcal{P}, d\in\mathcal{D}_p, s\in\mathcal{S}_{pd}\\
\label{eq:fullmodel:c5}  &w_{pds} + o_{pds}= z_{pds} & \forall p\in\mathcal{P}, d\in\mathcal{D}_p, s\in\mathcal{S}_{pd}\\
\label{eq:fullmodel:fs_dvx} & x_{pf} \geq 0  &\forall p\in\mathcal{P}, f\in\mathcal{F}, \\ 
\label{eq:fullmodel:fs_dvy} & y_{pfl} \in \{0,1\} &\forall p\in\mathcal{P}, f\in\mathcal{F}\, l \in L_{pf}, \\
\label{eq:fullmodel:fs_dvd} & \delta_{pd} \in \{0,1\} &\forall p\in\mathcal{P}, d \in \mathcal{D}_{p}, \\
\label{eq:fullmodel:ss_dvs} & z_{pds} \geq 0, w_{pds} \geq 0, o_{pds} \geq 0 & \forall p \in \mathcal{P}, d \in \mathcal{D}_{p}, s \in \mathcal{S}_{pd}. \\
\label{eq:fullmodel:c7}            &\mu_{pds}\leq w_{pds}&\forall p\in\mathcal{P}, d\in\mathcal{D}_p, s\in\mathcal{S}_{pd}\\   
\label{eq:fullmodel:c8}            &\mu_{pds}\leq D_{pds}\delta_{pd}&\forall p\in\mathcal{P}, d\in\mathcal{D}_p, s\in\mathcal{S}_{pd}\\
\label{eq:fullmodel:c9}            &\mu_{pds}\geq w_{pds} - D_{pds}(1-\delta_{pd})&\forall p\in\mathcal{P}, d\in\mathcal{D}_p, s\in\mathcal{S}_{pd}\\
\label{eq:fullmodel:c10}            &\rho_{pds}\leq o_{pds}&\forall p\in\mathcal{P}, d\in\mathcal{D}_p, s\in\mathcal{S}_{pd}\\   
\label{eq:fullmodel:c11}            &\rho_{pds}\leq N_{pds}\delta_{pd}&\forall p\in\mathcal{P}, d\in\mathcal{D}_p, s\in\mathcal{S}_{pd}\\
\label{eq:fullmodel:c12}            &\rho_{pds}\geq o_{pds} - N_{pds}(1-\delta_{pd})&\forall p\in\mathcal{P}, d\in\mathcal{D}_p, s\in\mathcal{S}_{pd}.
\end{align}
\end{subequations}

\end{appendices}

\end{document}